\numberwithin{equation}{section}
\newtheorem{theorem}{Theorem}[section]
\newtheorem{lemma}[theorem]{Lemma}
\theoremstyle{remark}
\newtheorem{remark}[theorem]{Remark}
\newtheorem*{rem*}{Remark}
\newcommand{\Lip}{\operatorname{Lip}}
\newcommand{\LebesgueMeasure}{\boldsymbol{\lambda}}
\renewcommand{\emptyset}{\varnothing}
\newcommand{\LayerFunc}{\boldsymbol{\ell}}
\newcommand{\CoeffFunc}{\boldsymbol{c}}
\newcommand{\Architecture}{\boldsymbol{a}}
\NewDocumentCommand\PreNorm{O{p}D<>{\alpha}}{\Gamma_{#2,#1}}
\NewDocumentCommand\NNSigma{O{n}D<>{\LayerFunc}O{\CoeffFunc}}{\Sigma_{#1}^{#2, #3}}
\NewDocumentCommand\ApproxSpace{O{p}D<>{\CoeffFunc}O{\alpha}}{A^{#3,#1}_{\LayerFunc,#2}}
\NewDocumentCommand\UnitBall{O{p}D<>{\CoeffFunc}O{\alpha}}{U^{#3,#1}_{\LayerFunc,#2}}
\NewDocumentCommand\ClosedUnitBall{O{p}D<>{\CoeffFunc}O{\alpha}}{\overline{U}^{#3,#1}_{\LayerFunc,#2}}
\newcommand{\WeightSize}[1]{\| #1 \|_{\mathcal{NN}}}
\newcommand{\N}{\mathbb{N}}
\newcommand{\R}{\mathbb{R}}
\newcommand{\indicator}{\mathds{1}}
\newcommand{\deterministic}{\mathrm{det}}
\newcommand{\MonteCarlo}{\mathrm{MC}}
\newcommand{\Alg}{\operatorname{Alg}}
\newcommand{\oset}[3][0ex]{%
  \mathrel{\mathop{#3}\limits^{
    \vbox to#1{\kern-2\ex@
    \hbox{$\scriptstyle#2$}\vss}}}}
\newcommand\avsuminner[2]{%
  {\sbox0{$\m@th#1\sum$}%
   \vphantom{\usebox0}%
   \ooalign{%
     \hidewidth
     \smash{\vrule height\dimexpr\ht0+1pt\relax depth\dimexpr\dp0+1pt\relax}%
     \hidewidth\cr
     $\m@th#1\sum$\cr
   }%
  }%
}
\newcommand{\x}{\boldsymbol{x}}
\newcommand{\din}{d_{\mathrm{in}}}
\newcommand{\dout}{d_{\mathrm{out}}}
\newcommand{\nrow}[1]{%
  \relax
  \vcenter to 0pt{%
    \vss
    \kern-1.5ex
    \rlap{$\left.\vphantom{\begin{matrix}0\\\vdots\\0\end{matrix}}\kern3em\right\}#1$}%
    \vss
  }%
}
\newcommand{\subjclass}[2][1991]{%
  \let\@oldtitle\@title%
  \gdef\@title{\@oldtitle\footnotetext{#1 \emph{Mathematics subject classification.} #2}}%
}
\newcommand{\keywords}[1]{%
  \let\@@oldtitle\@title%
  \gdef\@title{\@@oldtitle\footnotetext{\emph{Key words and phrases.} #1.}}%
}
\title{Sobolev-type embeddings\\{} for neural network approximation spaces}
\keywords{Deep Neural Networks,
Approximation Spaces,
Hölder spaces,
Embedding theorems,
Optimal Learning Algorithms}
\subjclass[2020]{Primary: 68T07, 46E35 Secondary: 65D05, 46E30}
\date{}
\author{Philipp Grohs%
\thanks{Faculty of Mathematics,
University of Vienna,
Oskar-Morgenstern-Platz~1,
A-1090 Vienna, Austria}
\,\thanks{Research Network Data Science @ Uni Vienna,
Währinger Straße 29/S6,
A-1090 Vienna, Austria}
\,\thanks{Johann Radon Institute,
Altenberger Straße 69,
A-4040 Linz,
Austria}
\,\footnotemark[5]
\, and Felix Voigtlaender\thanks{Department of Mathematics,
Technical University of Munich,
Boltzmannstr.\ 3,
85748 Garching bei München,
Germany}
\,\footnotemark[1]\,\,\,\thanks{Both authors contributed equally to this work.}
\,\,\thanks{F.\ Voigtlaender acknowledges support by the German Research Foundation (DFG)
in the context of the Emmy Noether junior research group VO 2594/1--1.}}
\begin{document}
\maketitle
\begin{quote}
\begin{center}
    \emph{This paper is dedicated to Ron DeVore on the occasion of his 80th birthday}
\end{center}
\end{quote}

\begin{abstract}
  We consider neural network approximation spaces that classify functions
  according to the rate at which they can be approximated (with error measured in $L^p$)
  by ReLU neural networks with an increasing number of coefficients,
  subject to bounds on the magnitude of the coefficients and the number of hidden layers.
  We prove embedding theorems between these spaces for different values of $p$.
  Furthermore, we derive sharp embeddings of these approximation spaces into Hölder spaces.
  We find that, analogous to the case of classical function spaces
  (such as Sobolev spaces, or Besov spaces) it is possible to trade ``smoothness''
  (i.e., approximation rate) for increased integrability. 
  
  Combined with our earlier results in [arXiv:2104.02746], our embedding theorems
  imply a somewhat surprising fact related to ``learning'' functions
  from a given neural network space based on point samples:
  if accuracy is measured with respect to the uniform norm,
  then an optimal ``learning'' algorithm for reconstructing functions
  that are well approximable by ReLU neural networks
  is simply given by piecewise constant interpolation on a tensor product grid.
\end{abstract}

%%%%%%%%%%%%%%%%%%%%%%%%%%%
%  Section: Introduction  %
%%%%%%%%%%%%%%%%%%%%%%%%%%%

\section{Introduction}
\label{sec:Introduction}

We consider approximation spaces associated to neural networks with ReLU activation function
$\varrho:\mathbb{R}\to \mathbb{R},$  $\varrho(x) = \max\{0,x\}$
as introduced in our recent work \cite{GrohsTheoryToPracticeGap};
see also Section~\ref{sec:NNApproximationSpaces}.
Roughly speaking, these spaces $\ApproxSpace$ are defined as follows:
Given a depth-growth function $\LayerFunc \!:\! \mathbb{N} \to \mathbb{N}_{\geq 2} \cup \{\infty\}$
and a coefficient growth function $\CoeffFunc: \mathbb{N} \to \mathbb{N} \cup \{\infty\}$,
the approximation space $\ApproxSpace ([0,1]^d)$ consist of all functions
that can be approximated up to error $\mathcal{O}(n^{-\alpha})$,
with the error measured in the $L^p([0,1]^d)$ (quasi)-norm for some $p\in (0,\infty]$,
by neural networks with at most $\LayerFunc(n)$ layers and $n$ coefficients (non-zero weights)
of size at most $\CoeffFunc(n)$, $n\in \mathbb{N}$.
As shown in \cite{GrohsTheoryToPracticeGap} these spaces can be equipped
with a natural (quasi)-norm; see also Lemma~\ref{lem:ApproximationSpaceProperties} below.
In most applications in machine learning, one is interested in ``learning''
such functions from a finite amount of samples.
To render this problem well defined we would like the space $\ApproxSpace ([0,1]^d)$
to embed into $C([0,1]^d)$.
The question that we consider in this work is therefore to find conditions
under which such an embedding holds true.
More generally, we are interested in characterizing the existence of embeddings
of the approximation spaces into the Hölder spaces $C^{0,\beta}([0,1]^d)$ for $\beta \in (0, 1]$. 
The existence of such embeddings is far from clear; for example, recent work
\cite{devore2020neural,grohs2019deep,daubechies2021nonlinear,NeuralNetworksApproximatingRefinableFunctions}
shows that fractal and highly non-regular functions can be efficiently approximated
by (increasingly deep) ReLU neural networks.

The main contribution of this paper are sharp conditions
for when the space $\ApproxSpace ([0,1]^d)$ embeds into $C([0,1]^d)$ or $C^{0,\beta}([0,1]^d)$.
These results in particular show that, similar to classical Sobolev spaces,
one can trade smoothness for integrability in the context of neural network approximation,
where ``smoothness'' of a function is understood as it being well approximable by neural networks.

Building on these insights we consider the (highly practically relevant)
problem of finding optimal numerical algorithms for reconstructing arbitrary functions
in $\ApproxSpace[\infty] ([0,1]^d)$ from finitely many point samples,
where the reconstruction error is measured in the $L^\infty([0,1]^d)$ norm:
As a corollary to our embedding theorems we establish the somewhat surprising fact that,
irrespective of $\alpha$, an optimal algorithm is given by piecewise constant interpolation
with respect to a uniform tensor-product grid in $[0,1]^d$.  

\subsection{Description of Results}

In the following sections we sketch our main results.
To simplify the presentation, let us assume that the coefficient growth function $\CoeffFunc$
satisfies the asymptotic growth $\CoeffFunc \asymp n^\theta\cdot (\ln(2n))^{\kappa}$
for certain $\theta \geq 0$ and $\kappa \in \mathbb{R}$
(our general results hold for any coefficient growth function).
Let
\[
  \LayerFunc^\ast
  := \sup_{n\in\N}
       \LayerFunc(n)
  \in \N_{\geq 2} \cup \{ \infty \}
  \qquad \mbox{and} \qquad
  \gamma^\ast
  := \theta \cdot \LayerFunc^\ast + \lfloor \LayerFunc^\ast/2\rfloor
  \in [1,\infty].
\]

\subsubsection{Embedding Results}

Our first main theorem provides sharp embedding results into $C([0,1]^d)$.
A slight simplification of our main result reads as follows.

\begin{theorem}\label{thm:intro1}
  Assume that $\alpha > \frac{d}{p}\cdot \gamma^\ast$.
  Then the embedding $\ApproxSpace ([0,1]^d)\hookrightarrow C([0,1]^d)$ holds.
  Assume that $\alpha < \frac{d}{p}\cdot \gamma^\ast$.
  Then the embedding $\ApproxSpace ([0,1]^d)\hookrightarrow C([0,1]^d)$ does \emph{not} hold.
\end{theorem}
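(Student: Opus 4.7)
The proof separates into a positive direction (establishing the embedding when $\alpha > d\gamma^\ast/p$) and a negative direction (a family of counterexamples when $\alpha < d\gamma^\ast/p$). Both rely on a matched pair of sharp bounds for the Lipschitz constant of a ReLU network with a fixed budget of depth, weight magnitudes, and number of nonzero coefficients. The starting point is the elementary operator-norm estimate $\Lip(\Phi) \leq \prod_i \|W_i\|_{\mathrm{op}} \leq c^L \prod_i \sqrt{n_i}$ for a ReLU network $\Phi$ with weight matrices $W_i$ having $n_i$ nonzero entries of magnitude at most $c$. An AM-GM optimization over the layer-width allocation, together with the constraint that the output dimension is one, produces
\[
  \Lip(\Phi)
  \lesssim
  c^{L} \cdot n^{\lfloor L/2 \rfloor},
\]
so under the growth assumption $\CoeffFunc(n) \asymp n^\theta (\ln 2n)^\kappa$ one has $\Lip(\Phi) \lesssim n^{\gamma^\ast}$, up to a logarithmic factor, for every $\Phi \in \NNSigma$ at scale $n$.

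\textbf{Positive direction.} Given $f \in \ApproxSpace([0,1]^d)$ of unit norm, choose admissible approximants $\Phi_n$ with $\|f - \Phi_n\|_{L^p} \lesssim n^{-\alpha}$ and form the dyadic telescoping differences $g_k := \Phi_{2^{k+1}} - \Phi_{2^k}$. Each $g_k$ can itself be realized as a ReLU network with $\CalO(2^k)$ coefficients of depth at most $\LayerFunc^\ast$, so the Lipschitz bound yields $\Lip(g_k) \lesssim 2^{k\gamma^\ast}$, while $\|g_k\|_{L^p} \lesssim 2^{-k\alpha}$ by the triangle inequality. Combined with the Nikolskii-type inequality
\[
  \|g\|_{L^\infty([0,1]^d)}
  \lesssim
  \|g\|_{L^p}
  + \Lip(g)^{d/(p+d)} \, \|g\|_{L^p}^{p/(p+d)},
\]
valid for any Lipschitz $g \colon [0,1]^d \to \R$ (proved by evaluating $g$ on a ball of radius $\|g\|_\infty / (2 \Lip(g))$ around a near-maximizer), this gives $\|g_k\|_{L^\infty} \lesssim 2^{-k(\alpha p - d\gamma^\ast)/(p+d)}$, which decays geometrically precisely when $\alpha > d\gamma^\ast/p$. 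Hence $(\Phi_{2^k})_k$ is Cauchy in $C([0,1]^d)$ and converges uniformly to a continuous representative of $f$, with operator norm controlled linearly by $\|f\|_{\ApproxSpace}$.

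\textbf{Negative direction.} For $\alpha < d\gamma^\ast/p$, exhibit a sequence $(\phi_n)_n$ of admissible networks uniformly bounded in $\ApproxSpace$ but with $\|\phi_n\|_{L^\infty} \to \infty$. Take $\phi_n$ to be a peaky spike of height $H_n$ supported on a set of radius $w_n$, implemented so as to saturate the Lipschitz upper bound: setting $w_n = H_n / n^{\gamma^\ast}$ forces $\Lip(\phi_n) \asymp n^{\gamma^\ast}$. Architecturally, $\phi_n$ is built by composing a narrow one-dimensional tent with a deep chain whose weights compound multiplicatively to produce the factor $c(n)^{\ell^\ast}$, the remaining coefficients being invested in layer width to deliver the $n^{\lfloor \ell^\ast/2\rfloor}$ factor through operator-norm amplification. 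The spike has $\|\phi_n\|_{L^p} \asymp H_n^{1+d/p} \, n^{-d\gamma^\ast/p}$; selecting $H_n$ so this equals $n^{-\alpha}$ gives $H_n \asymp n^{(d\gamma^\ast - \alpha p)/(d+p)}$, which diverges exactly when $\alpha < d\gamma^\ast/p$. Since $\phi_n \in \NNSigma$, the best approximation error from $\Sigma_m^{\LayerFunc,\CoeffFunc}$ vanishes for $m \geq n$ and is at most $\|\phi_n\|_{L^p} \lesssim n^{-\alpha} \leq m^{-\alpha}$ for $m < n$ (via the zero approximation). Thus $\|\phi_n\|_{\ApproxSpace}$ stays bounded while $\|\phi_n\|_{L^\infty} = H_n \to \infty$, ruling out any continuous embedding into $L^\infty$, let alone into $C$.

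\textbf{Main obstacle.} The essential difficulty is to construct the saturating networks $\phi_n$ in such a way that the same critical exponent $\gamma^\ast = \theta \ell^\ast + \lfloor \ell^\ast/2 \rfloor$ appears on both sides of the theorem. The depth contribution $\theta \ell^\ast$ is obtained easily by chaining $\ell^\ast$ thin layers with weights of magnitude $c(n)$, but the width contribution $\lfloor \ell^\ast/2\rfloor$ requires a delicate split of the parameter budget between depth (weight compounding) and width (matrix operator-norm amplification), subject to the integrality of widths and the asymmetry caused by the fixed scalar output layer; the floor function is forced by this integrality constraint, and the upper and lower bounds must agree on it. Subsidiary issues include tracking the $(\ln 2n)^\kappa$ factor (which is subleading and does not shift the critical $\alpha$), handling the edge case $\LayerFunc^\ast = \infty$ where $\gamma^\ast = \infty$ (the first implication becomes vacuous and the second is strengthened by the additional freedom in architecture), and verifying that both the dyadic differences and the peaky construction genuinely lie in the admissible class with the stated architectural constants.
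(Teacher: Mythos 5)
Your proposal is correct and takes essentially the same route as the paper: for the positive direction, a dyadic telescoping argument combined with the network Lipschitz bound (the paper's Lemma~\ref{lem:NetworkLipschitzEstimate}) and the Lipschitz--$L^p$--$L^\infty$ interpolation inequality of Lemma~\ref{lem:LipschitzLpVersusLInfty} (which you state in its $T$-optimized, Nikolskii form), and for the negative direction, the rescaled one-dimensional ReLU spike $\zeta_{M'}(x)=\varrho(1-M'\sum_i x_i)$ chosen to saturate the Lipschitz bound while keeping the $\ApproxSpace$-norm at most one. The only cosmetic deviations are that the paper controls $\Lip(F_{m+1}-F_m)$ by simple subadditivity of $\Lip$ rather than by re-realizing the difference as a single ReLU network, and it scales the spike by a small constant $\kappa$ so that membership in $\NNSigma[(d+L)n_k]$ (rather than $\NNSigma[n_k]$) still yields $\PreNorm\leq1$---a bookkeeping point your sketch glosses over but which is easily absorbed.
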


Theorem~\ref{thm:intro1} is a special case of Theorems~\ref{thm:SobolevTypeEmbeddingSufficient}
and \ref{thm:SobolevTypeEmbeddingNecessary} below that will be proven in Section~\ref{sec:Embeddings}.

We would like to draw attention to the striking similarity between Theorem~\ref{thm:intro1}
and the classical Sobolev embedding theorem for Sobolev spaces $W^{\alpha,p}$
which states that $W^{\alpha,p}([0,1]^d)\hookrightarrow C([0,1]^d)$ holds if $\alpha > \frac{d}{p}$,
but not if $\alpha < \frac{d}{p}$.
Just as in the case of classical Sobolev spaces we show that,
also in the context of neural network approximation,
it is possible to trade ``smoothness'' (i.e., approximation order) for integrability.

We also prove sharp embedding results into the Hölder spaces $C^{0,\beta}([0,1]^d)$.
A slight simplification of our main result reads as follows.

\begin{theorem}\label{thm:intro2}
  Let $\beta \in (0,1)$.
  \begin{itemize}
    \item If $\alpha > \frac{\beta + \frac{d}{p}}{1-\beta}\cdot \gamma^\ast$,
          then the embedding $\ApproxSpace ([0,1]^d)\hookrightarrow C^{0,\beta}([0,1]^d)$ holds.

    \item If $\alpha < \frac{\beta + \frac{d}{p}}{1-\beta}\cdot \gamma^\ast$,
          then the embedding $\ApproxSpace ([0,1]^d)\hookrightarrow C^{0,\beta}([0,1]^d)$
          does \emph{not} hold.
  \end{itemize}
\end{theorem}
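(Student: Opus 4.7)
The plan is to combine an $L^{\infty}$ approximation rate for $f \in \ApproxSpace([0,1]^d)$ with a Lipschitz bound for the approximating networks, then balance the two scales against $|x-y|$. For $f \in \ApproxSpace([0,1]^d)$, pick a near-best approximation $\Phi_n \in \NNSigma$ with $\|f - \Phi_n\|_{L^p} \lesssim n^{-\alpha}\|f\|_{\ApproxSpace}$. Since the hypothesis $\alpha > \frac{\beta + d/p}{1-\beta}\gamma^{\ast}$ automatically implies $\alpha > \frac{d}{p}\gamma^{\ast}$, Theorem~\ref{thm:intro1} already gives $f \in C([0,1]^d)$, and the telescoping argument underlying it in fact yields $\|f - \Phi_n\|_{L^{\infty}} \lesssim n^{-(\alpha - (d/p)\gamma^{\ast})}\|f\|_{\ApproxSpace}$ by combining the $L^p$-rates of the dyadic differences $g_k = \Phi_{2^{k+1}} - \Phi_{2^k}$ with a Nikolskii-type inequality $\|g\|_{L^{\infty}} \lesssim n^{(d/p)\gamma^{\ast}}\|g\|_{L^p}$ on $\NNSigma$. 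Independently, a direct layer-by-layer estimate gives $\Lip(\Phi_n) \lesssim n^{\gamma^{\ast}}$ by bounding each layer's operator norm by $\CoeffFunc(n)\sqrt{n_i}$ and applying AM-GM to $\sum_i n_i \leq n$ across the $\LayerFunc^{\ast}$ layers; the $\lfloor \LayerFunc^{\ast}/2 \rfloor$ summand in the definition of $\gamma^{\ast}$ arises precisely from these square-root factors. Then for every $x \neq y$,
\[
  |f(x) - f(y)|
  \leq \Lip(\Phi_n)\, |x-y| + 2\|f - \Phi_n\|_{L^{\infty}}
  \lesssim n^{\gamma^{\ast}}|x-y| + n^{-(\alpha - (d/p)\gamma^{\ast})},
\]
and optimizing the balancing parameter $n$ as a function of $|x-y|$ gives $|f(x) - f(y)| \lesssim |x-y|^{\beta}$ precisely when the exponent balance accommodates $\beta$, which after rearrangement is exactly $\alpha > \frac{\beta + d/p}{1-\beta}\gamma^{\ast}$.

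\textbf{Necessary direction.} For $\alpha < \frac{\beta + d/p}{1-\beta}\gamma^{\ast}$, the plan is to build explicit counterexamples from extremal spikes. For each $n_k = 2^k$, construct $\phi_{n_k} \in \NNSigma[n_k]$ as a spike of width $w_k \sim n_k^{-\gamma^{\ast}}$ centered at some $x_k \in (0,1)^d$, realized by composing a depth-$\lfloor \LayerFunc^{\ast}/2 \rfloor$ feature-multiplication gadget (contributing the spatial factor $n_k^{\lfloor \LayerFunc^{\ast}/2 \rfloor}$) with a $\LayerFunc^{\ast}$-fold weight-amplification gadget (contributing $n_k^{\theta\LayerFunc^{\ast}}$ out of the $\CoeffFunc(n) \asymp n^{\theta}$ budget). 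Place the centers $x_k$ at disjoint points and set $f = \sum_k c_k\,\phi_{n_k}$, choosing coefficients $c_k$ so that the partial sums $F_K = \sum_{k<K} c_k\,\phi_{n_k}$ lie in $\NNSigma$ and realize $\|f - F_K\|_{L^p} \asymp n_K^{-\alpha}$; a routine calculation using $\|\phi_{n_k}\|_{L^p} \sim w_k^{d/p}$ pins $c_k$ down explicitly and places $f$ in $\ApproxSpace([0,1]^d)$ with bounded norm. At the base of each spike, however, $[f]_{C^{0,\beta}} \gtrsim c_k\,w_k^{-\beta}$, and this quantity is unbounded in $k$ exactly when $\alpha$ falls below the stated threshold.

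\textbf{Main obstacle.} The sufficient direction is essentially mechanical once the two NN-specific ingredients (the Nikolskii inequality and the Lipschitz bound) are in hand. The real difficulty lies on the lower-bound side: the spike family $\phi_{n}$ must genuinely belong to $\NNSigma$ and simultaneously saturate all three budgets (parameter count $n$, depth $\LayerFunc^{\ast}$, and coefficient magnitude $\CoeffFunc(n) \asymp n^{\theta}$) in order to reach the spatial scale $n^{-\gamma^{\ast}}$ that makes the Hölder-seminorm blowup match the sufficient threshold. Treating the $\theta\LayerFunc^{\ast}$ and $\lfloor \LayerFunc^{\ast}/2 \rfloor$ contributions to $\gamma^{\ast}$ in a decoupled way produces only a weaker extremal and leaves a gap with the upper bound, so the construction must coordinate depth-based feature multiplication with weight amplification inside a single network.
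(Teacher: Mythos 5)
Your sufficient-direction outline is structurally close in spirit to the paper's (both interpolate between an $L^p$ rate and a Lipschitz bound and then balance scales), but two of your stated intermediate estimates are wrong, and the error is not merely cosmetic: if one actually carries out the balancing with the rates you wrote down, one gets a threshold strictly below the correct one, contradicting the necessary direction. Concretely, the ``Nikolskii-type inequality'' $\|g\|_{L^\infty} \lesssim n^{(d/p)\gamma^\ast}\|g\|_{L^p}$ is false for $g \in \NNSigma - \NNSigma$. What the Lipschitz bound $\Lip(g) \lesssim n^{\gamma^\ast}$ actually yields, after optimizing the window $T$ in \Cref{lem:LipschitzLpVersusLInfty}, is the multiplicative interpolation
\[
  \|g\|_{L^\infty}
  \lesssim \|g\|_{L^p}^{p/(d+p)} \, \bigl(\Lip(g)\bigr)^{d/(d+p)},
\]
and this cannot be upgraded to the linear-in-$\|g\|_{L^p}$ form you assert (a function with tiny $L^p$ norm but large Lipschitz constant violates it). Plugging in $\|g_k\|_{L^p} \lesssim 2^{-\alpha k}$ and $\Lip(g_k) \lesssim 2^{\gamma^\ast k}$ for the dyadic differences gives the $L^\infty$ rate $\mu = \tfrac{p}{d+p}\bigl(\alpha - \tfrac{d}{p}\gamma^\ast\bigr)$ --- this is exactly the exponent appearing in \Cref{thm:SobolevTypeEmbeddingSufficient} --- and not the faster rate $\alpha - \tfrac{d}{p}\gamma^\ast$ you claim. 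Balancing $n^{\gamma^\ast}|x-y|$ against the \emph{correct} $n^{-\mu}$ yields the Hölder exponent $\tfrac{\mu}{\gamma^\ast + \mu} = \tfrac{\alpha - (d/p)\gamma^\ast}{\gamma^\ast + \alpha}$, which rearranges to the stated threshold $\alpha > \tfrac{\beta + d/p}{1-\beta}\gamma^\ast$; balancing against your stated rate $n^{-(\alpha - (d/p)\gamma^\ast)}$ instead gives the exponent $\tfrac{\alpha - (d/p)\gamma^\ast}{\gamma^\ast + \alpha - (d/p)\gamma^\ast}$, which rearranges to the \emph{incorrect} threshold $\alpha > \bigl(\tfrac{\beta}{1-\beta} + \tfrac{d}{p}\bigr)\gamma^\ast$. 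So your final assertion that ``rearrangement gives exactly'' the stated threshold does not follow from the estimates you write; a compensating arithmetic slip is hiding the inconsistency. Note also that the paper proves the Hölder embedding directly by applying a Hölder-seminorm analogue of the $L^\infty$ interpolation lemma (\Cref{lem:HoelderConstantBound}) to the dyadic differences and summing; it does not route through the $L^\infty$ approximation rate at all, which sidesteps the question of how the Nikolskii-type step should really look.

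For the necessary direction, you correctly identify what the key obstacle is --- one must coordinate the depth-based ``multiply by $n$ per two layers'' effect with the coefficient-magnitude budget $\CoeffFunc(n)$ inside a single network to reach the slope scale $n^{\gamma^\ast}$ --- but you do not actually resolve it, and the rest of your outline (placing separated spikes at dyadically shrinking scales and tuning coefficients $c_k$ so the partial sums fall into $\NNSigma$) is both unnecessary and delicate: one would have to verify that a sum of $K$ separate spike-networks still respects the parameter count, depth, and coefficient bounds simultaneously, and that the cross-scale superposition does not spoil the $L^p$ approximation rate. The paper avoids all of this by using a \emph{sequence} of single-spike functions $f_k = \tfrac{M_k}{M_k'}\,\zeta_{M_k'}$ with $\zeta_{M'}(x) = \varrho\bigl(1 - M'\sum_i x_i\bigr)$, realized by an explicit network whose alternating $1\!\times\!n$ / $n\!\times\!1$ layers satisfy $A\,\varrho(Bx) = C^2 n\,\varrho(x)$, so that $\tfrac{M}{M'}\zeta_{M'} \in \NNSigma[(d+L)n]$ whenever $M \leq C^L n^{\lfloor L/2\rfloor}$. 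This construction is precisely the ``coordinated'' gadget you say is needed; showing $\|f_k\|_{\ApproxSpace} \leq 1$ while $\Lip_\beta(f_k) \to \infty$ then kills the embedding without ever having to build a single non-Hölder function. As it stands, your necessary direction is an outline with the central construction missing.
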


Theorem~\ref{thm:intro2} is a special case of Theorems~\ref{thm:NNIntoHoelderSpace}
and \ref{thm:SobolevTypeEmbeddingNecessary} below that will be proven in Section~\ref{sec:Embeddings}.

Theorems~\ref{thm:intro1} and \ref{thm:intro2} provide a complete characterization
of the embedding properties of the neural network approximation spaces $\ApproxSpace([0,1]^d)$
away from the ``critical'' index $\alpha = \frac{d}{p}\cdot \gamma^\ast$
(embedding into $C([0,1]^d)$), respectively $\alpha = \frac{\beta + \frac{d}{p}}{1-\beta}\cdot \gamma^\ast$
(embedding into $C^{0,\beta}([0,1]^d)$).
We leave the (probably subtle) question of what happens at the critical index open for future work.

\subsubsection{Optimal Learning Algorithms}

Our embedding theorems allow us to deduce an interesting and surprising result
related to the algorithmic feasibility of a learning problem
posed on neural network approximation spaces. 

More precisely, we consider the problem of approximating a function in the unit ball 
\[
  \UnitBall[\infty]([0,1]^d)
  := \big\{
       f \in \ApproxSpace[\infty]([0,1]^d)
       \,\,\colon\,\,
       \| f \|_{\ApproxSpace[\infty]([0,1]^d)}
       \leq 1
     \big\}
\]
of the neural network approximation space $\ApproxSpace[\infty] ([0,1]^d)$
under the constraint that the approximating algorithm
is only allowed to access \emph{point samples} of the functions $f \in U$.
Formally, a (deterministic) algorithm $A : \UnitBall[\infty]([0,1]^d) \to L^\infty([0,1]^d)$
using $m$ point samples is determined by a set of sample points
$\x = (x_1,\dots,x_m) \in ([0,1]^d)^m$ and a map
$Q : \R^m \to L^\infty([0,1]^d)$ such that
\[
  A (f) = Q\bigl(f(x_1),\dots,f(x_m)\bigr)
  \qquad \forall \, f \in \UnitBall[\infty]([0,1]^d) .
\]
Note that $A$ need \emph{not} be an ``algorithm'' in the usual sense,
i.e., implementable on a Turing machine.
The set of all such ``algorithms'' is denoted $\Alg_m (\UnitBall[\infty]([0,1]^d))$
and we define the optimal order for (deterministically) uniformly approximating
functions in $\UnitBall[\infty]([0,1]^d)$ using point samples
as the best possible convergence rate with respect to the number of samples:
\begin{multline*}
  \beta^{\deterministic}_{\ast} (\UnitBall[\infty]([0,1]^d))
  := \sup
     \Big\{
       \lambda \geq 0
       \,\,\colon\,\,
       \exists \, C > 0 \,\,
         \forall \, m \in \N: \\
           \inf_{A \in \Alg_m (\UnitBall[\infty]([0,1]^d))}
             \sup_{f \in \UnitBall[\infty]([0,1]^d)}
               \| A(f) - f \|_{L^\infty([0,1]^d)}
           \leq C \cdot m^{-\lambda}
     \Big\} .
\end{multline*}
In a similar way one can define randomized (Monte Carlo) algorithms
and consider the optimal order $\beta_\ast^{\MonteCarlo} (\UnitBall[\infty]([0,1]^d))$
for approximating functions in $\UnitBall[\infty]([0,1]^d)$ using randomized algorithms
based on point samples; see \cite[Section~2.4.2]{GrohsTheoryToPracticeGap}.
In \cite[Theorem~1.1]{GrohsTheoryToPracticeGap} we showed
(under the assumption $\LayerFunc^\ast \geq 3$) that 
\begin{equation}\label{eq:optimal_sampling_rates}
    \beta^{\deterministic}_{\ast} (\UnitBall[\infty]([0,1]^d)) = \beta_\ast^{\MonteCarlo} (\UnitBall[\infty]([0,1]^d))
    = \begin{cases}
        \frac{1}{d}
        \cdot \frac{\alpha}{\gamma^\ast  + \alpha}
        & \text{if } \LayerFunc^\ast< \infty \\
        0
        & \text{else}.
      \end{cases}
\end{equation}
We now describe a particularly simple algorithm that achieves this optimal rate.
For $m \in \mathbb{N}_{\geq 2^d}$, let $n := \lfloor m^{1/d}\rfloor - 1$ and let 
\[
  A_m^{\mathrm{pw-const}}(f)(x)
  := \sum_{i_1 = 0}^n \cdots \sum_{i_d=0}^n
       f \left(\frac{i_1}{n},\dots , \frac{i_d}{n}\right)
       \indicator_{\left(\frac{i_1}{n},\dots , \frac{i_d}{n}\right) + \left[0,\frac{1}{n}\right)^d}(x)
\]
be the piecewise constant interpolation of $f$ on a tensor product grid
with sidelength $\frac{1}{n}$, where for $\tau \in [0,1]^d$ the function
$\indicator_{\tau + \left[0,\frac{1}{n}\right)^d}$ denotes the indicator function
of the set $\tau + \left[0,\frac{1}{n}\right)^d\subset \R^d$.
Since $(n+1)^d \le m$ it clearly holds that 
\[
    A_m^{\mathrm{pw-const}} \in \Alg_m (\UnitBall[\infty]([0,1]^d)).
\]
We have the following theorem.

\begin{theorem}\label{thm:intro3}
  Under the above assumptions on $\LayerFunc,\CoeffFunc$,
  the piecewise constant interpolation operator $A_m^{\mathrm{pw-const}}$
  constitutes an optimal algorithm for reconstructing functions
  in $\UnitBall[\infty]([0,1]^d)$ from point samples:
  we have
  \begin{equation}
    \begin{split}
      \beta^{\deterministic}_{\ast} (\UnitBall[\infty]([0,1]^d))
      & = \beta^{MC}_{\ast} (\UnitBall[\infty]([0,1]^d)) \\
      & = \sup
          \Big\{
            \lambda \geq 0
            \,\,\colon\,\,
            \exists \, C > 0 \,\,
              \forall \, m \in \N: \\ & \hspace{2.5cm}
                 \sup_{f \in \UnitBall[\infty]([0,1]^d)}
                   \| A_m^{\mathrm{pw-const}}(f) - f \|_{L^\infty([0,1]^d)}
                 \leq C \cdot m^{-\lambda}
          \Big\} .  
    \end{split}
    \label{eq:optimal_algorithm}
  \end{equation}
\end{theorem}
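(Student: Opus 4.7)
My plan is to combine equation~\eqref{eq:optimal_sampling_rates} with Theorem~\ref{thm:intro2} and the standard $L^\infty$ error bound for piecewise constant interpolation of Hölder functions. Since $A_m^{\mathrm{pw-const}}\in \Alg_m (\UnitBall[\infty]([0,1]^d))$, the supremum on the right-hand side of \eqref{eq:optimal_algorithm} is automatically at most $\beta^{\deterministic}_{\ast}(\UnitBall[\infty]([0,1]^d))$, which by \eqref{eq:optimal_sampling_rates} equals $\tfrac{1}{d}\cdot\tfrac{\alpha}{\gamma^\ast+\alpha}$ when $\LayerFunc^\ast<\infty$ and $0$ otherwise. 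The remaining task is therefore to establish the matching lower bound on the convergence rate of $A_m^{\mathrm{pw-const}}$.

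The first step is the elementary estimate that for any $\beta\in (0,1)$ and any $f\in C^{0,\beta}([0,1]^d)$, every $x\in [0,1)^d$ lies in a unique cell $\bigl(\tfrac{i_1}{n},\dots,\tfrac{i_d}{n}\bigr)+[0,\tfrac{1}{n})^d$ whose corner has $\ell^\infty$-distance at most $1/n$ from $x$, so that
\[
  \bigl\|A_m^{\mathrm{pw-const}}(f)-f\bigr\|_{L^\infty([0,1]^d)}
  \le [f]_{C^{0,\beta}} \cdot \bigl(\sqrt{d}/n\bigr)^\beta
  \lesssim [f]_{C^{0,\beta}}\cdot m^{-\beta/d},
\]
using $n = \lfloor m^{1/d}\rfloor -1 \asymp m^{1/d}$ for $m\ge 2^d$. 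The null set at the upper boundary of $[0,1]^d$ where the indicator functions fail to cover the cube is harmless because the $L^\infty$ norm is an essential supremum.

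The second step applies Theorem~\ref{thm:intro2} with $p=\infty$: the sufficient condition $\alpha>\tfrac{\beta+d/p}{1-\beta}\gamma^\ast$ simplifies to $\beta<\tfrac{\alpha}{\alpha+\gamma^\ast}$, and so for every such $\beta$ we obtain a bounded embedding $\UnitBall[\infty]([0,1]^d)\hookrightarrow C^{0,\beta}([0,1]^d)$, in particular $M_\beta:=\sup_{f\in \UnitBall[\infty]([0,1]^d)}[f]_{C^{0,\beta}}<\infty$. Combined with the previous display this yields $\sup_{f\in \UnitBall[\infty]([0,1]^d)}\bigl\|A_m^{\mathrm{pw-const}}(f)-f\bigr\|_{L^\infty([0,1]^d)}\le C_\beta\cdot m^{-\beta/d}$ for all $m\ge 2^d$. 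Since $\beta$ may be chosen arbitrarily close to $\tfrac{\alpha}{\alpha+\gamma^\ast}$, every $\lambda<\tfrac{1}{d}\cdot\tfrac{\alpha}{\alpha+\gamma^\ast}$ lies in the set on the right-hand side of \eqref{eq:optimal_algorithm}, which matches the upper bound and concludes the case $\LayerFunc^\ast<\infty$.

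The case $\LayerFunc^\ast=\infty$ is handled separately: by \eqref{eq:optimal_sampling_rates} we then have $\beta^{\deterministic}_{\ast}=0$, so the squeeze between the trivial lower bound $\lambda=0$ (valid because $A_m^{\mathrm{pw-const}}$ is uniformly bounded on the $L^\infty$-bounded unit ball) and the upper bound forces both sides of \eqref{eq:optimal_algorithm} to vanish. The only genuinely delicate point in the argument is that the embedding constant $C_\beta$ from Theorem~\ref{thm:intro2} will blow up as $\beta\uparrow \tfrac{\alpha}{\alpha+\gamma^\ast}$, but this is harmless because the definition of the rate uses a supremum over $\lambda$ strictly below the critical exponent; essentially all of the heavy lifting is already encoded in \eqref{eq:optimal_sampling_rates} and in Theorem~\ref{thm:intro2}.
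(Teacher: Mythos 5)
Your proposal is correct and follows essentially the same route as the paper's proof: invoke \eqref{eq:optimal_sampling_rates} for the upper bound and for the identity $\beta^{\deterministic}_\ast = \beta^{\MonteCarlo}_\ast$, reduce to the case $\LayerFunc^\ast < \infty$, then for each $\lambda$ strictly below the optimal rate pick $\beta = d\lambda$, apply Theorem~\ref{thm:intro2} with $p=\infty$ (using $\beta < \alpha/(\alpha+\gamma^\ast) \iff \alpha > \tfrac{\beta}{1-\beta}\gamma^\ast$) to embed the unit ball into a multiple of the $C^{0,\beta}$ unit ball, and conclude via the standard piecewise-constant interpolation estimate. One cosmetic slip: since the paper's $\Lip_\beta$ is defined via $\|\cdot\|_{\ell^\infty}$ and each point of $[0,1)^d$ is within $\ell^\infty$-distance $1/n$ of its cell corner, the bound should read $[f]_{C^{0,\beta}}\cdot n^{-\beta}$ without the $\sqrt{d}$ factor (which would only enter if one passed through the Euclidean norm); this is harmless since it is just a constant. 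You also spell out the $\LayerFunc^\ast = \infty$ case and the blow-up of the embedding constant near the critical $\beta$ slightly more explicitly than the paper, but these are elaborations rather than a different argument.
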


\begin{proof}
    Equation~(\ref{eq:optimal_sampling_rates}) yields the best possible rate
    with respect to the number of samples.
    Without loss of generality we assume that $\LayerFunc^\ast< \infty$
    (otherwise there is nothing to show as the optimal rate is zero).
    We now show that all rates below the optimal rate $\frac{1}{d}
    \cdot \frac{\alpha}{\gamma^\ast  + \alpha}$ can be achieved by piecewise constant interpolation,
    which readily implies our theorem.
    To this end, let $\lambda \in \bigl(0,\frac{1}{d} \cdot \frac{\alpha}{\gamma^\ast  + \alpha}\bigr)$
    be arbitrary and let $\beta := d \cdot \lambda \in (0, \frac{\alpha}{\gamma^\ast + \alpha}) \subset (0,1)$.
    It is easy to check that $\alpha > \frac{\beta}{1-\beta} \gamma^\ast$.
    Hence, Theorem~\ref{thm:intro2} (applied with $p=\infty$) implies the embedding
    $\ApproxSpace[\infty] ([0,1]^d)\hookrightarrow C^{0,\beta}([0,1]^d)$,
    yielding the existence of a constant $C_1 > 0$ with 
    \[
        \UnitBall[\infty]([0,1]^d)\subset C_1\cdot U^\beta,
    \]
    where $U^\beta$ denotes the unit ball in $C^{0,\beta}([0,1]^d)$.
    Therefore, 
    \begin{equation}\label{eq:pwconstest}
      \sup_{f \in \UnitBall[\infty]([0,1]^d)}
        \| A_m^{\mathrm{pw-const}}(f) - f \|_{L^\infty([0,1]^d)}
      \leq \sup_{f \in C_1\cdot U^\beta}
             \| A_m^{\mathrm{pw-const}}(f) - f \|_{L^\infty([0,1]^d)}
     \leq C \cdot m^{-\lambda},
    \end{equation}
    where the last inequality follows from the elementary fact
    that piecewise constant interpolation on the uniform tensor-product grid
    $\{ \frac{0}{n}, \frac{1}{n}, ..., \frac{n}{n} \}^d \strut$ achieves $L^\infty$ error
    of $\mathcal{O}(n^{-\beta}) \subset \mathcal{O}(m^{-\beta / d}) \!=\! \mathcal{O}(m^{-\lambda})$
    for functions $f \in U^\beta$.
    Since $\lambda<\frac{1}{d}\cdot \frac{\alpha}{\gamma^\ast + \alpha}$ was arbitrary,
    Equation~(\ref{eq:pwconstest}) proves Equation~(\ref{eq:optimal_algorithm})
    and thus our theorem.
\end{proof}

Theorem~\ref{thm:intro3} proves that, provided that the reconstruction error
is measured in the $L^\infty$ norm, \emph{the optimal ``learning'' algorithm
for reconstructing functions that are well approximable by ReLU neural networks
is simply given by piecewise constant interpolation on a tensor product grid},
at least if no additional information about the target function is given.

We finally remark that we expect all our results to hold (in slightly modified form)
if general bounded Lipschitz domains $\Omega\subset \mathbb{R}^d$ are considered
in place of the unit cube $[0,1]^d$.

\subsection{Related Work}

The study of approximation properties of neural networks has a long history, see for example
the surveys \cite{devore2020neural,grohs2019deep,pinkus1999approximation,berner2021modern}
and the references therein.
Function spaces related to neural networks have been studied in several works.
For example, \cite{barron1993universal,weinan2019barron,CarageaBarronBoundary}
study so-called \emph{Barron-type spaces} that arise in a certain sense
as approximation spaces of shallow neural networks.
The paper \cite{beneventano2020high} develops a calculus on spaces of functions
that can be approximated by neural networks without curse of dimension.
Closest to our work are the papers \cite{NNApproximationSpaces,GrohsTheoryToPracticeGap}
which study functional analytic properties of neural network approximation spaces
similar to the ones studied in the present paper.
The paper \cite{NNApproximationSpaces} also considers embedding results
between such spaces with different constraints on their underlying network architecture. 
However, none of the mentioned works considered embedding theorems of Sobolev type
comparable to those in the present paper.
An important reason for this is that the spaces considered in \cite{NNApproximationSpaces}
do not restrict the magnitude of the weights of the approximating networks,
so that essentially $\CoeffFunc \equiv \infty$ in our terminology.
The resulting approximation spaces then contain functions of very low smoothness.

\subsection{Structure of the Paper}

The structure of this paper is as follows.
In Section~\ref{sec:NNApproximationSpaces} we provide a definition
of the neural network approximation spaces considered in this paper
and establish some of their basic properties.
In Section~\ref{sec:Embeddings} we prove our embedding theorems.

\section{Definition of neural network approximation spaces}%
\label{sec:NNApproximationSpaces}

%!TEX root=./main.tex

In this section, we review the mathematical definition of neural networks,
then formally introduce the neural network approximation spaces $\ApproxSpace ([0,1]^d)$,
and finally define the quantities $\LayerFunc^\ast$ and $\gamma^\ast (\LayerFunc,\CoeffFunc)$
that will turn out to be decisive for characterizing whether the embeddings
$\ApproxSpace([0,1]^d) \hookrightarrow C^{0,\beta}([0,1]^d)$
or $\ApproxSpace([0,1]^d) \hookrightarrow C([0,1]^d)$ hold.

\subsection{The mathematical formalization of neural networks}
\label{sub:NeuralNetworks}

In our analysis, it will be helpful to distinguish between a neural network $\Phi$
as a set of weights and the associated function $R_\varrho \Phi$ computed by the network.
Thus, we say that a \emph{neural network} is a tuple
${\Phi = \big( (A_1,b_1), \dots, (A_L,b_L) \big)}$,
with $A_\ell \in \R^{N_\ell \times N_{\ell-1}}$ and $b_\ell \in \R^{N_\ell}$.
We then say that ${\Architecture(\Phi) := (N_0,\dots,N_L) \in \N^{L+1}}$
is the \emph{architecture} of $\Phi$, $L(\Phi) := L$ is the \emph{number of layers}%
\footnote{Note that the number of \emph{hidden} layers is given by $H = L-1$.}
of $\Phi$, and ${W(\Phi) := \sum_{j=1}^L (\| A_j \|_{\ell^0} + \| b_j \|_{\ell^0})}$
denotes the \emph{number of (non-zero) weights} of $\Phi$.
The notation $\| A \|_{\ell^0}$ used here denotes the number of non-zero entries
of a matrix (or vector) $A$.
Finally, we write $\din(\Phi) := N_0$ and $\dout(\Phi) := N_L$
for the \emph{input and output dimension} of $\Phi$, and we set
$\WeightSize{\Phi} := \max_{j = 1,\dots,L} \max \{ \| A_j \|_{\infty}, \| b_j \|_{\infty} \}$,
where ${\| A \|_{\infty} := \max_{i,j} |A_{i,j}|}$.

To define the function $R_\varrho \Phi$ computed by $\Phi$, we need to specify an
\emph{activation function}.
In this paper, we will only consider the so-called \emph{rectified linear unit (ReLU)}
${\varrho : \R \to \R, x \mapsto \max \{ 0, x \}}$,
which we understand to act componentwise on $\R^n$, i.e.,
$\varrho \bigl( (x_1,\dots,x_n)\bigr) = \bigl(\varrho(x_1),\dots,\varrho(x_n)\bigr)$.
The function $R_\varrho \Phi : \R^{N_0} \to \R^{N_L}$ computed by the network $\Phi$
(its \emph{realization}) is then given by
\[
  R_\varrho \Phi
  := T_L \circ (\varrho \circ T_{L-1}) \circ \cdots \circ (\varrho \circ T_1)
  \quad \text{where} \quad
  T_\ell \, x = A_\ell \, x + b_\ell .
\]

\subsection{Neural network approximation spaces}
\label{sub:NeuralNetworkApproxSpaces}

\emph{Approximation spaces} \cite{DeVoreConstructiveApproximation} classify functions
according to how well they can be approximated by a family
$\boldsymbol{\Sigma} = (\Sigma_n)_{n \in \N}$ of certain ``simple functions''
of increasing complexity $n$, as $n \to \infty$.
Common examples consider the case where $\Sigma_n$ is the set of polynomials of degree $n$,
or the set of all linear combinations of $n$ wavelets.
The notion of \emph{neural network approximation spaces} was originally introduced in
\cite{NNApproximationSpaces}, where $\Sigma_n$ was taken to be a family of neural networks
of increasing complexity.
However, \emph{\cite{NNApproximationSpaces} does not impose any restrictions on the size
of the individual network weights}, which plays an important role in practice and
which is important in determining the regularity of the function implemented by the network.
Since the function spaces introduced in \cite{NNApproximationSpaces} ignore this quantity,
the spaces defined there \emph{never} embed into the Hölder-space $C^{0,\beta}$
for any $\beta > 0$.

For this reason, and following our previous work \cite{GrohsTheoryToPracticeGap},
we introduce a modified notion of neural network approximation spaces
that also takes the size of the individual network weights into account.
Precisely, given an input dimension $d \in \N$ (which we will keep fixed throughout this paper) and
non-decreasing functions ${\LayerFunc : \N \to \N_{\geq 2} \cup \{ \infty \}}$
and $\CoeffFunc : \N \to \N \cup \{ \infty \}$ (called the \textbf{depth-growth function}
and the \textbf{coefficient growth function}, respectively), we define
\[
  \NNSigma
  := \Big\{
       R_\varrho \Phi
       \,\, \colon
       \begin{array}{l}
         \Phi \text{ NN with }
         d_{\mathrm{in}}(\Phi) = d,
         d_{\mathrm{out}}(\Phi) = 1, \\
         W(\Phi) \leq n,
         L(\Phi) \leq \LayerFunc(n),
         \WeightSize{\Phi} \leq \CoeffFunc(n)
       \end{array}
     \Big\} .
\]
Then, given a measurable subset $\Omega \subset \R^d$ as well as $p \in (0,\infty]$
and $\alpha \in (0,\infty)$, for each measurable $f : \Omega \to \R$, we define
\[
  \PreNorm (f)
  := \max
     \Big\{
       \| f \|_{L^p(\Omega)} , \quad
       \sup_{n \in \N}
       \big[
         n^{\alpha}
         \cdot d_{p} \bigl(f, \NNSigma\bigr)
       \big]
     \Big\}
  \in [0,\infty] ,
\]
where $d_{p}(f, \Sigma) := \inf_{g \in \Sigma} \| f - g \|_{L^p(\Omega)}$.

The remaining issue is that since the set $\NNSigma$ is in general neither
closed under addition nor under multiplication with scalars, $\PreNorm$
is \emph{not a (quasi)-norm}.
To resolve this issue,
taking inspiration from the theory of Orlicz spaces
(see e.g.~\cite[Theorem~3 in Section~3.2]{RaoRenOrliczSpaces}),
we define the \emph{neural network approximation space quasi-norm}
$\| \cdot \|_{\ApproxSpace}$ as
\[
  \| f \|_{\ApproxSpace}
  := \inf \bigl\{ \theta > 0 \,\,\colon\,\, \PreNorm(f / \theta) \leq 1 \bigr\}
  \in [0,\infty],
\]
giving rise to the \emph{approximation space}
\[
  \ApproxSpace
  := \ApproxSpace (\Omega)
  := \bigl\{
       f \in L^p(\Omega)
       \,\,\colon\,\,
       \| f \|_{\ApproxSpace} < \infty
     \bigr\}
  .
\]
The following lemma summarizes the main elementary properties of these spaces.

\begin{lemma}\label{lem:ApproximationSpaceProperties}
  Let $\emptyset \neq \Omega \subset \R^d$ be measurable,
  let $p \in (0,\infty]$ and $\alpha \in (0,\infty)$.
  Then, the approximation space $\ApproxSpace := \ApproxSpace (\Omega)$ satisfies the following properties:
  \begin{enumerate}
    \item $(\ApproxSpace, \| \cdot \|_{\ApproxSpace})$ is a quasi-normed space.
          Precisely, given arbitrary measurable functions $f,g : \Omega \to \R$, it holds that
          $\| f + g \|_{\ApproxSpace} \leq C \cdot (\| f \|_{\ApproxSpace} + \| g \|_{\ApproxSpace})$
          for $C = C(\alpha,p)$.

    \item We have $\PreNorm (c f) \leq |c| \, \PreNorm(f)$ for $c \in [-1,1]$.

    \item $\PreNorm(f) \leq 1$ if and only if $\| f \|_{\ApproxSpace} \leq 1$.

    \item $\PreNorm(f) < \infty$ if and only if $\| f \|_{\ApproxSpace} < \infty$.

    \item $\ApproxSpace (\Omega) \hookrightarrow L^p(\Omega)$.
  \end{enumerate}
\end{lemma}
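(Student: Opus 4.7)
My plan is to dispatch (2) and (5) first as direct consequences of the definition, then to handle (3) via a continuity argument, to prove the quasi-triangle inequality (1) by a parallel-networks construction, and finally to deduce (4) from the preceding items; the main subtlety lies in (1) and the reverse direction of (4). For (2), the key observation is that $\NNSigma$ is closed under output scaling by any $c\in[-1,1]$: given $\Phi=((A_1,b_1),\dots,(A_L,b_L))\in\NNSigma$, the network $\Phi':=((A_1,b_1),\dots,(cA_L,cb_L))$ realizes $c\cdot R_\varrho\Phi$ and satisfies $W(\Phi')\leq W(\Phi)$, $L(\Phi')=L(\Phi)$, and $\WeightSize{\Phi'}\leq|c|\,\WeightSize{\Phi}\leq\CoeffFunc(n)$. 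This gives $d_p(cf,\NNSigma)\leq|c|\,d_p(f,\NNSigma)$, and combined with $\|cf\|_{L^p}=|c|\,\|f\|_{L^p}$ yields (2). For (5), any $\theta>\|f\|_{\ApproxSpace}$ satisfies $\|f\|_{L^p}/\theta=\|f/\theta\|_{L^p}\leq\PreNorm(f/\theta)\leq 1$, so $\|f\|_{L^p}\leq\theta$; passing to the infimum gives $\|f\|_{L^p}\leq\|f\|_{\ApproxSpace}$.

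For (3), the forward direction is immediate from the infimum definition (take $\theta=1$). For the converse, choose $\theta_k\downarrow\|f\|_{\ApproxSpace}$ with $\PreNorm(f/\theta_k)\leq 1$ and set $t_k:=1/\theta_k\uparrow 1$. The $L^p$-(quasi-)triangle inequality gives, for each fixed $n$, $|d_p(t_kf,\NNSigma)-d_p(f,\NNSigma)|\lesssim_p\|t_kf-f\|_{L^p}=(1-t_k)\,\|f\|_{L^p}\to 0$, hence $n^\alpha d_p(f,\NNSigma)\leq\liminf_k n^\alpha d_p(t_kf,\NNSigma)\leq 1$; combined with $\|f\|_{L^p}\leq\|f\|_{\ApproxSpace}\leq 1$ from (5), this yields $\PreNorm(f)\leq 1$.

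For (1), given $\theta_f>\|f\|_{\ApproxSpace}$ and $\theta_g>\|g\|_{\ApproxSpace}$ with $\PreNorm(f/\theta_f),\PreNorm(g/\theta_g)\leq 1$, set $\theta:=\theta_f+\theta_g$, $a:=\theta_f/\theta$, $b:=\theta_g/\theta$, so that $(f+g)/\theta=aF+bG$ with $a+b=1$, $F:=f/\theta_f$, $G:=g/\theta_g$. Picking approximants $h_F,h_G\in\NNSigma[m]$ with $L^p$-errors $\leq 2m^{-\alpha}$, the function $ah_F+bh_G$ can be realized as a single ReLU network by placing $h_F,h_G$ in parallel (sharing the input) and merging their final affine layers into one output layer with matrix $[aA_L^F\mid bA_L^G]$ and bias $ab_L^F+bb_L^G$; since $a+b=1$ and $a,b\leq 1$, both the merged matrix and bias have magnitude $\leq\CoeffFunc(m)$, while the depth remains $\leq\LayerFunc(m)$ and the total weight count is $\leq 2m+O(1)$. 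The $L^p$-quasi-triangle inequality then produces $d_p((f+g)/\theta,\NNSigma[Cm])\leq C_p\,m^{-\alpha}$; standard bookkeeping (using the monotonicity $\NNSigma\subseteq\NNSigma[n']$ for $n\leq n'$) gives $\PreNorm((f+g)/(C\theta))\leq 1$ for some $C=C(p,\alpha)$, hence $\|f+g\|_{\ApproxSpace}\leq C(\|f\|_{\ApproxSpace}+\|g\|_{\ApproxSpace})$.

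For (4), the forward direction is immediate from (2) by taking $\theta=\max(1,\PreNorm(f))$. For the reverse direction, if $\|f\|_{\ApproxSpace}=M<\infty$, then (3) yields $\PreNorm(f/(M+1))\leq 1$, and the $n^{-\alpha}$ approximation rate is transferred from $f/(M+1)$ back to $f$ by realizing $(M+1)h_n$ via $k:=\lceil M+1\rceil$ parallel copies of an approximant $h_n\in\NNSigma$ of $f/(M+1)$, summed in a merged output layer with weights $(M+1)/k\leq 1$. The main obstacle here is that the merged output bias naively scales as $(M+1)b_L$, potentially exceeding $\CoeffFunc(n)$; this is resolved by a standard auxiliary-neuron preprocessing of $h_n$ that absorbs the last-layer bias $b_L$ into an extra entry of the last hidden layer at the cost of $O(1)$ additional weights (always possible since $\LayerFunc(n)\geq 2$). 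After this reduction the merged output bias vanishes while all magnitudes remain $\leq\CoeffFunc(n)$, placing $(M+1)h_n\in\NNSigma[O(kn)]$ and giving $\PreNorm(f)<\infty$ with constant depending on $M$.
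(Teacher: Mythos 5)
The paper does not actually present a proof of this lemma: it simply cites Lemma~2.1 of the companion paper \texttt{[GrohsTheoryToPracticeGap]} for $p\in[1,\infty]$ and declares the case $p\in(0,1)$ analogous. Your proposal is a self-contained reconstruction, and the architecture is sound: the dispatch order (2)$\to$(5)$\to$(3)$\to$(1)$\to$(4), the output-layer rescaling for (2), the parallel-and-merge construction for (1), and the bias-absorption trick for (4) are exactly the kind of arguments one expects the cited lemma to rely on. Two small technical points deserve attention. First, in step~(1) the merge of the two final affine layers tacitly assumes that $h_F$ and $h_G$ have the same depth; in general $L(h_F)\neq L(h_G)$, and one must pad the shallower network to a common depth (for a scalar output one can use $\varrho(y)-\varrho(-y)$ per padded layer, costing weights $\pm1$ and a number of extra weights controlled by $\min\{\LayerFunc(m),m\}\leq m$, so the total stays $\mathcal{O}(m)$); you should state this. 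Second, in step~(3) the displayed estimate $|d_p(t_kf,\Sigma)-d_p(f,\Sigma)|\lesssim_p\|t_kf-f\|_{L^p}$ is not quite a Lipschitz statement when $p\in(0,1)$; the clean version is to pass to the subadditive power $q:=\min\{1,p\}$ and use $d_p(f,\Sigma)^q\leq d_p(t_kf,\Sigma)^q+\|f-t_kf\|_{L^p}^q$, which still yields $n^\alpha d_p(f,\NNSigma)\leq 1$ in the limit. Finally, the last step of (4) invokes (3) together with the absolute homogeneity $\|cf\|_{\ApproxSpace}=|c|\,\|f\|_{\ApproxSpace}$; this follows from the substitution $\theta\mapsto|c|\theta$ in the infimum defining $\|\cdot\|_{\ApproxSpace}$ and from $\PreNorm(-g)=\PreNorm(g)$, but it is used silently and should be made explicit. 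With these repairs your argument is correct.
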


\begin{proof}
  For $p\in [1,\infty]$ this is precisely \cite[Lemma~2.1]{GrohsTheoryToPracticeGap}.
  The case $p\in (0,1)$ can be proven in a completely analogous way and is left to the reader.
\end{proof}

\begin{remark}
  While we introduced the spaces $\ApproxSpace(\Omega)$ for general domains $\Omega$,
  in what follows we will specialize to $\Omega = [0,1]^d$.
  We expect, however, that all our results will remain true
  (up to minor modifications) for general bounded Lipschitz domains.
\end{remark}

\subsection{Quantities characterizing the complexity of the network architecture}
\label{sub:GammaDefinition}

To conveniently summarize those aspects of the growth behavior of the functions
$\LayerFunc$ and $\CoeffFunc$ most relevant to us, we introduce three quantities
that will turn out to be crucial for characterizing whether the embeddings
$\ApproxSpace([0,1]^d) \hookrightarrow C^{0,\beta}([0,1]^d)$
or $\ApproxSpace([0,1]^d) \hookrightarrow C([0,1]^d)$ hold.
First, we set
\begin{equation}
  \LayerFunc^\ast
  := \sup_{n\in\N} \LayerFunc(n)
  \in \N_{\geq 2} \cup \{ \infty \} .
  \label{eq:MaximalDepth}
\end{equation}
Furthermore, we define
$\gamma^\ast (\LayerFunc, \CoeffFunc), \gamma^{\diamondsuit} (\LayerFunc, \CoeffFunc) \in (0,\infty]$
by
\begin{equation}
  \begin{split}
    \gamma^\ast (\LayerFunc, \CoeffFunc)
    & := \sup
         \bigg\{
           \gamma \in [0,\infty)
           \quad\colon\quad
           \exists \, L \in \N_{\leq \LayerFunc^\ast} : \,\,
             \limsup_{n \to \infty}
               \frac{(\CoeffFunc(n))^L \cdot n^{\lfloor L/2 \rfloor}}{n^\gamma}
               \in (0,\infty]
         \bigg\}, \\
    \gamma^{\diamondsuit} (\LayerFunc, \CoeffFunc)
    & := \inf
         \Big\{
           \gamma \in [0,\infty)
           \quad \colon \quad
           \exists \, C > 0 \,
             \forall \, n \in \N, L \in \N_{\leq \LayerFunc^\ast} : \,\,
               (\CoeffFunc(n))^L \cdot n^{\lfloor L/2 \rfloor}
               \leq C \cdot n^\gamma
         \Big\} ,
  \end{split}
  \label{eq:GammaDefinition}
\end{equation}
where we use the convention that $\sup \emptyset = 0$ and $\inf \emptyset = \infty$.
As we now show, the quantities $\gamma^\ast$ and $\gamma^{\diamondsuit}$ actually coincide.

\begin{lemma}\label{lem:GammaProperties}
  We have $\gamma^\ast (\LayerFunc,\CoeffFunc) = \gamma^{\diamondsuit}(\LayerFunc,\CoeffFunc)$.
  Furthermore, if $\LayerFunc^\ast = \infty$, then
  $\gamma^\ast (\LayerFunc,\CoeffFunc) = \gamma^{\diamondsuit}(\LayerFunc,\CoeffFunc) = \infty$.
\end{lemma}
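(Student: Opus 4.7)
The plan is to prove both equalities by separating into two cases depending on whether $\LayerFunc^\ast$ is finite. Throughout, I write $F_L(n) := (\CoeffFunc(n))^L \cdot n^{\lfloor L/2 \rfloor}$ for brevity.

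First I handle the infinite case, which I do directly because it is essentially forced by the fact that $n^{\lfloor L/2 \rfloor}$ grows without bound in $L$. Assume $\LayerFunc^\ast = \infty$, so $L$ is allowed to range over all of $\N$. For $\gamma^{\diamondsuit}$: for any fixed $\gamma \geq 0$, evaluating at a fixed $n \geq 2$ with $\CoeffFunc(n) \in \N$ (the case $\CoeffFunc(n) = \infty$ is even easier, as then $F_L(n) = \infty$), the quantity $F_L(n) \geq n^{\lfloor L/2 \rfloor}$ is unbounded in $L$, so no constant $C$ can bound $F_L(n) \leq C \cdot n^\gamma$ uniformly. Hence the inf set defining $\gamma^{\diamondsuit}$ is empty, giving $\gamma^{\diamondsuit} = \infty$. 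For $\gamma^\ast$: given any $\gamma \geq 0$, pick $L \in \N$ with $\lfloor L/2 \rfloor > \gamma$; then $F_L(n)/n^\gamma \geq n^{\lfloor L/2 \rfloor - \gamma} \to \infty$, placing $\gamma$ in the sup set, so $\gamma^\ast = \infty$ as well.

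Now assume $\LayerFunc^\ast < \infty$; the key advantage is that $L$ now ranges over the finite set $\{1,\ldots,\LayerFunc^\ast\}$, permitting a maximum over $L$ of finitely many constants. For $\gamma^\ast \leq \gamma^{\diamondsuit}$: let $\gamma < \gamma^\ast$; by definition of the supremum, there is some admissible $\gamma_1 \in (\gamma, \gamma^\ast]$, and the same $L \leq \LayerFunc^\ast$ that witnesses $\gamma_1$ also witnesses $\gamma$ (since $\limsup_n F_L(n)/n^\gamma \geq \limsup_n F_L(n)/n^{\gamma_1} > 0$). Thus there is a subsequence $n_k \to \infty$ and $c > 0$ with $F_L(n_k) \geq c \, n_k^\gamma$. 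For any $\gamma' < \gamma$, then $F_L(n_k)/n_k^{\gamma'} \to \infty$, so no uniform bound $F_L(n) \leq C \, n^{\gamma'}$ can hold; hence $\gamma'$ is not in the infimum set for $\gamma^{\diamondsuit}$, giving $\gamma^{\diamondsuit} \geq \gamma$. Letting $\gamma \uparrow \gamma^\ast$ yields $\gamma^{\diamondsuit} \geq \gamma^\ast$.

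For the reverse inequality $\gamma^{\diamondsuit} \leq \gamma^\ast$: let $\gamma > \gamma^\ast$. Then for \emph{every} $L \leq \LayerFunc^\ast$, the condition defining $\gamma^\ast$ fails, meaning $\limsup_n F_L(n)/n^\gamma \notin (0,\infty]$; since $F_L(n) \geq 0$, this forces $\limsup_n F_L(n)/n^\gamma = 0$, hence $F_L(n)/n^\gamma \to 0$, hence $F_L(n) \leq C_L \cdot n^\gamma$ for some constant $C_L > 0$. Because there are only finitely many admissible $L$, taking $C := \max_{L \leq \LayerFunc^\ast} C_L$ gives the uniform bound $F_L(n) \leq C \cdot n^\gamma$ for all $n \in \N$ and $L \leq \LayerFunc^\ast$. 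Thus $\gamma$ lies in the infimum set, yielding $\gamma^{\diamondsuit} \leq \gamma$; taking $\gamma \downarrow \gamma^\ast$ gives $\gamma^{\diamondsuit} \leq \gamma^\ast$. The only mild subtlety (hardly an obstacle) is the asymmetry between ``$\exists L$'' in the definition of $\gamma^\ast$ and ``$\forall L$'' in that of $\gamma^{\diamondsuit}$, which is precisely what the finiteness of $\LayerFunc^\ast$ reconciles via the $\max_L C_L$ step; without that finiteness one cannot collapse finitely many polynomial bounds into one, which is exactly what goes wrong (correctly) in the $\LayerFunc^\ast = \infty$ case.
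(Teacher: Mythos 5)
Your proof is correct and follows essentially the same strategy as the paper's: split on $\LayerFunc^\ast$ finite or infinite, and in the finite case prove the two inequalities $\gamma^\ast \le \gamma^{\diamondsuit}$ and $\gamma^{\diamondsuit} \le \gamma^\ast$ by the expected $\limsup$/upward-closedness manipulations. The one place you diverge is in establishing $\gamma^{\diamondsuit} \le \gamma^\ast$: you obtain a per-$L$ constant $C_L$ from $\limsup_n F_L(n)/n^\gamma = 0$ and then take $C := \max_{L \le \LayerFunc^\ast} C_L$, using only finiteness of the index set, whereas the paper exploits monotonicity of $(\CoeffFunc(n))^L \cdot n^{\lfloor L/2 \rfloor}$ in $L$ (valid since $\CoeffFunc(n) \ge 1$ and $n \ge 1$) to reduce at once to $L = \LayerFunc^\ast$ and extract a single constant. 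Your variant is marginally more robust (it would still work if $\CoeffFunc$ were allowed to take values in $(0,1)$), while the paper's is slightly tighter; both are fine.
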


\begin{proof}
  For brevity, write $\gamma^\ast := \gamma^\ast (\LayerFunc,\CoeffFunc)$
  and $\gamma^{\diamondsuit} := \gamma^{\diamondsuit} (\LayerFunc,\CoeffFunc)$.

  \medskip{}

  \textbf{Case~1 ($\LayerFunc^\ast = \infty$):}
  In this case, given any $\gamma > 0$,
  one can choose an even $L \in \N = \N_{\leq \LayerFunc^\ast}$
  satisfying $L > 2 \gamma$.
  It is then easy to see
  \(
    \limsup_{n \to \infty}
      \frac{(\CoeffFunc(n))^L \cdot n^{\lfloor L/2 \rfloor}}{n^\gamma}
    \geq \limsup_{n \to \infty}
           1
    = 1 \in (0,\infty]
    .
  \)
  By definition of $\gamma^\ast$, this implies $\gamma^\ast \geq \gamma$ for each
  $\gamma > 0$, and hence $\gamma^\ast = \infty$.

  \smallskip{}

  Next, we show that $\gamma^{\diamondsuit} = \infty$ as well.
  To see this, assume towards a contradiction that $\gamma^{\diamondsuit} < \infty$.
  By definition of $\gamma^{\diamondsuit}$, this means that there exist $\gamma \geq 0$
  and $C > 0$ satisfying
  \[
    C \cdot n^\gamma
    \geq (\CoeffFunc(n))^L \cdot n^{\lfloor L/2 \rfloor}
    \geq n^{\lfloor L/2 \rfloor}
    \qquad \forall \, n,L \in \N .
  \]
  But choosing an even $L \in \N$ satisfying $L > 2 \gamma$,
  we have $\lfloor L/2 \rfloor = L/2 > \gamma$, showing that the preceding displayed inequality
  cannot hold.
  This is the desired contradiction.

  \medskip{}

  \textbf{Case~2 ($\LayerFunc^\ast < \infty$):}
  We first show $\gamma^\ast \leq \gamma^{\diamondsuit}$.
  In case of $\gamma^{\diamondsuit} = \infty$, this is trivial, so that we can assume
  $\gamma^{\diamondsuit} < \infty$.
  Let $\gamma_0 \in [0,\infty)$ such that there exists $C > 0$ satisfying
  $(\CoeffFunc(n))^L \cdot n^{\lfloor L/2 \rfloor} \leq C \cdot n^{\gamma_0}$ for all
  $n \in \N$ and $L \in \N_{\leq \LayerFunc^\ast}$.
  Then, given any $L \in \N_{\leq \LayerFunc^\ast}$ and $\gamma > \gamma_0$, we have
  \[
    \limsup_{n \to \infty}
      \frac{(\CoeffFunc(n))^L \cdot n^{\lfloor L/2 \rfloor}}{n^{\gamma}}
    \leq \limsup_{n \to \infty}
           \frac{C \cdot n^{\gamma_0}}{n^{\gamma}}
    = 0.
  \]
  This easily implies $\gamma^\ast \leq \gamma_0$.
  Since this holds for all $\gamma_0$ as above, we see by definition of $\gamma^{\diamondsuit}$
  that $\gamma^\ast \leq \gamma^{\diamondsuit}$.

  We finally show that $\gamma^{\diamondsuit} \leq \gamma^\ast$.
  In case of $\gamma^\ast = \infty$, this is clear; hence, we can assume that $\gamma^\ast < \infty$.
  Let $\gamma > \gamma^\ast$ be arbitrary.
  Then
  \[
    0
    \leq \liminf_{n \to \infty}
           \frac{(\CoeffFunc(n))^{\LayerFunc^\ast} \cdot n^{\lfloor \LayerFunc^\ast/2 \rfloor}}{n^\gamma}
    \leq \limsup_{n \to \infty}
           \frac{(\CoeffFunc(n))^{\LayerFunc^\ast} \cdot n^{\lfloor \LayerFunc^\ast/2 \rfloor}}{n^\gamma}
    =    0 .
  \]
  Thus, since convergent sequences are bounded, we see that
  \(
    (\CoeffFunc(n))^{\LayerFunc^\ast} \cdot n^{\lfloor \LayerFunc^\ast / 2 \rfloor}
    \leq C \cdot n^\gamma
  \)
  for all $n \in \N$ and a suitable $C > 0$.
  Given any $L \in \N_{\leq \LayerFunc^\ast}$, we thus have
  \(
    (\CoeffFunc(n))^L \cdot n^{\lfloor L/2 \rfloor}
    \leq (\CoeffFunc(n))^{\LayerFunc^\ast} \cdot n^{\lfloor \LayerFunc^\ast / 2 \rfloor}
    \leq C \cdot n^\gamma .
  \)
  By definition of $\gamma^{\diamondsuit}$, this implies $\gamma^{\diamondsuit} \leq \gamma$.
  Since this holds for all $\gamma > \gamma^\ast$,
  we conclude $\gamma^{\diamondsuit} \leq \gamma^\ast$.
\end{proof}

%%%%%%%%%%%%%%%%%%%%%%%%%%%%%%%%%%%%%%%%%%%%%%%%%%%%%%%%%%%%%%%%%%
%  Section: Sobolev-type embeddings for NN approximation spaces  %
%%%%%%%%%%%%%%%%%%%%%%%%%%%%%%%%%%%%%%%%%%%%%%%%%%%%%%%%%%%%%%%%%%

\section{Sobolev-type embeddings for neural network approximation spaces}%
\label{sec:Embeddings}

%!TEX root=./main.tex

In this section, we establish a characterization of the embeddings
$\ApproxSpace([0,1]^d) \hookrightarrow C([0,1]^d)$
and $\ApproxSpace([0,1]^d) \hookrightarrow C^{0,\beta}([0,1]^d)$
solely in terms of $\gamma^\ast(\LayerFunc,\CoeffFunc)$
and the quantities $d \in \N$, $\beta \in (0,1)$, and $p \in (0,\infty]$.
This is roughly similar to Sobolev embeddings,
which allow one to ``trade smoothness for increased integrability.''

\subsection{Sufficient Conditions for Embedding into \texorpdfstring{$C([0,1]^d)$}{C([0,1]ᵈ)}}

In this subsection we establish sufficient conditions for an embedding
$\ApproxSpace([0,1]^d) \hookrightarrow C([0,1]^d)$ to hold.
The following lemma---showing that functions with large (but finite) Lipschitz constant
but small $L^p$ norm have a controlled $L^\infty$ norm---is an essential ingredient for the proof.

We first make precise the notion of Lipschitz constant that we use in what follows.
Given $d\in \mathbb{N}$ and a function $f:[0,1]^d \to \mathbb{R}$ we define 
\[
  \Lip(f)
  := \sup_{x,y \in [0,1]^d, x \neq y}
       \frac{|f(x) - f(y)|}{\| x-y \|_{\ell^\infty}}
  \in [0,\infty]
  .
\]

\begin{lemma}\label{lem:LipschitzLpVersusLInfty}
  Let $p \in (0,\infty)$ and $d \in \N$.
  Then there exists a constant $C = C(d,p) > 0$ such that for each $0 < T \leq 1$
  and each $f \in L^p([0,1]^d) \cap C^{0,1}([0,1]^d)$, we have
  \[
    \| f \|_{L^\infty}
    \leq C \cdot \big( T^{-d/p} \, \| f \|_{L^p} + T \cdot \Lip(f) \big).
  \]
\end{lemma}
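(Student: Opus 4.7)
The plan is to exploit the Lipschitz bound to show that if $f$ attains a large value at some point $x_0$, then $|f|$ remains comparably large on a cube of sidelength $T$ around $x_0$; this in turn forces $\|f\|_{L^p}$ to be at least a fixed fraction of $\|f\|_{L^\infty}$, and rearranging gives the claim.

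First I would reduce to the situation where $M := \|f\|_{L^\infty}$ is attained at a point $x_0 \in [0,1]^d$. This is immediate because $f$ is continuous on the compact cube, and one may replace $f$ by $-f$ (which leaves both $\Lip(f)$ and $\|f\|_{L^p}$ unchanged) to assume $f(x_0) = M \geq 0$. I would then set $L := \Lip(f)$; the Lipschitz property yields $f(x) \geq M - LT$ for every $x$ in the ``box around the supremum'' $B := (x_0 + [-T,T]^d) \cap [0,1]^d$, so that in particular $|f(x)| \geq M - LT$ on $B$ whenever the right-hand side is non-negative.

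The key geometric observation is a uniform lower bound $|B| \geq T^d$. This follows from a coordinatewise case analysis: since $T \leq 1$ and $x_0^{(i)} \in [0,1]$, the one-dimensional intersection $[x_0^{(i)} - T, x_0^{(i)} + T] \cap [0,1]$ always has length at least $T$ (the worst case being $x_0^{(i)} \in \{0,1\}$, which still yields length exactly $T$). Multiplying these coordinatewise lower bounds gives the $d$-dimensional estimate. It is essential here that $T \leq 1$, and I would flag this as the only place where the hypothesis is used.

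To finish, I would split into two cases. If $M \leq LT$, the desired inequality holds trivially (with constant $1$ in front of the $T \cdot \Lip(f)$ term). Otherwise, integrating the pointwise lower bound over $B$ gives
\[
  \|f\|_{L^p}^p
  \geq \int_B |f(x)|^p \, dx
  \geq (M - LT)^p \cdot T^d,
\]
so that, taking $p$-th roots and rearranging, $M \leq T^{-d/p} \|f\|_{L^p} + T \cdot \Lip(f)$. This proves the lemma, and in fact yields the constant $C = 1$, independent of $d$ and $p$. I do not anticipate any real obstacle: the argument is a standard ``Lipschitz-versus-$L^p$'' interpolation inequality, and the only point requiring care is the geometric volume bound $|B| \geq T^d$.
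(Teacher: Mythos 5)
Your proof is correct and follows the same basic strategy as the paper's: locate a point $x_0$ where $\|f\|_{L^\infty}$ is attained, use the Lipschitz constant to control $f$ on the cube of side $T$ around $x_0$, and invoke a lower bound on the volume of that cube intersected with $[0,1]^d$. A few details of your execution are in fact cleaner than the paper's. You establish the volume bound $|B| \geq T^d$ directly by a coordinatewise case analysis, whereas the paper cites an external lemma yielding only $|P| \geq 2^{-d}T^d$. More importantly, by splitting into the cases $M \leq LT$ (trivial) and $M > LT$, and then integrating the pointwise \emph{lower} bound $|f| \geq M - LT$ over $B$, you bound $\|f\|_{L^p}^p$ from below without ever invoking a triangle inequality in $L^p$; the paper instead takes the $L^p(P)$ quasi-norm of the pointwise \emph{upper} bound $\|f\|_{L^\infty} \leq \Lip(f)\,T + |f|$, which for $p < 1$ requires the $p$-dependent quasi-triangle constant $C_1(p)$. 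Your route is self-contained and yields the sharp constant $C = 1$, uniformly in $d$ and $p$.
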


\begin{proof}
  For brevity, set $Q := [0,1]^d$.
  Let $f \in L^p([0,1]^d) \cap C^{0,1}([0,1]^d)$.
  Since $Q$ is compact and $f$ is continuous, there exists $x_0 \in Q$
  satisfying $|f(x_0)| = \| f \|_{L^\infty}$.
  Set $P := Q \cap (x_0 + [-T,T]^d)$.
  For $x \in P$, we have
  \[
    \| f \|_{L^\infty}
    = |f(x_0)|
    \leq |f(x_0) - f(x)| + |f(x)|
    \leq \Lip(f) \cdot \| x - x_0 \|_{\ell^\infty} + |f(x)|
    \leq \Lip(f) \cdot T + |f(x)| .
  \]
  Now, denoting the Lebesgue measure by $\LebesgueMeasure$,
  taking the $L^p(P)$ norm of the preceding estimate,
  and recalling that $L^p$ is a (quasi)-Banach space,
  we obtain a constant $C_1 = C_1(p) > 0$ satisfying
  \begin{align*}
    \| f \|_{L^\infty} \cdot [\LebesgueMeasure(P)]^{1/p}
    & =    \big\|
             \| f \|_{L^\infty}
           \big\|_{L^p(P)}
      \leq \big\|
             \Lip(f) \cdot T + |f|
           \big\|_{L^p(P)} \\
    & \leq C_1 \cdot \big(
                       \Lip(f) \cdot T \cdot [\LebesgueMeasure(P)]^{1/p}
                       + \| f \|_{L^p(P)}
                     \big) \\
    & \leq C_1 \cdot \big(
                       \Lip(f) \cdot T \cdot [\LebesgueMeasure(P)]^{1/p}
                       + \| f \|_{L^p}
                     \big) .
  \end{align*}
  Next, \cite[Lemma~A.2]{GrohsTheoryToPracticeGap}
  shows that $\LebesgueMeasure(P) \geq 2^{-d} \, T^d$, so that we finally see
  \[
    \| f \|_{L^\infty}
    \leq C_1 \cdot \bigl(\Lip(f) \cdot T + [\LebesgueMeasure(P)]^{-1/p} \, \| f \|_{L^p}\bigr)
    \leq C_1 \cdot \bigl(\Lip(f) \cdot T + 2^{d/p} \, T^{-d/p} \, \| f \|_{L^p}\bigr) ,
  \]
  which easily implies the claim.
\end{proof}

In order to derive a sufficient condition for the embedding
$\ApproxSpace([0,1]^d) \hookrightarrow C([0,1]^d)$, we will use the following
lemma that provides a bound for the Lipschitz constant of a function
implemented by a neural network, in terms of the size (number of non-zero weights)
of the network and of the magnitude of the individual network weights.

\begin{lemma}\label{lem:NetworkLipschitzEstimate}(\cite[Lemma~4.1]{GrohsTheoryToPracticeGap})
  Let $\LayerFunc : \N \to \N_{\geq 2} \cup \{ \infty \}$ and $\CoeffFunc : \N \to [1,\infty]$.
  Let $n \in \N$ and assume that $L := \LayerFunc(n)$ and $C := \CoeffFunc(n)$ are finite.
  Then each $F \in \NNSigma$ satisfies
  \[
    \Lip (F)
    \leq d \cdot C^L \cdot n^{\lfloor L/2 \rfloor} .
  \]
\end{lemma}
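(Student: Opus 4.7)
My plan is as follows. Since $F = R_\varrho \Phi$ is a piecewise affine function of $x$, it is differentiable outside a Lebesgue null set, and its Lipschitz constant with respect to $\|\cdot\|_{\ell^\infty}$ equals the supremum over regular points $x$ of the operator norm $\|DF(x)\|_{(\R^d,\|\cdot\|_\infty) \to \R}$. Equivalently, because $F$ is scalar-valued, this is $\sup_x \|DF(x)\|_{\ell^\infty \to \ell^\infty}$ viewed as a $1 \times d$ matrix. On each affine piece the chain rule together with $\varrho'(t) \in \{0,1\}$ (almost everywhere) gives
\[
  DF(x) \;=\; A_L \, D_{L-1}(x) \, A_{L-1} \, D_{L-2}(x) \cdots D_1(x) \, A_1,
\]
where each $D_\ell(x)$ is a diagonal matrix with $\{0,1\}$-entries. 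I would absorb each $D_\ell$ into its adjacent $A_\ell$: set $B_L := A_L$ and $B_\ell := D_\ell(x)\,A_\ell$ for $\ell < L$, so that each $B_\ell$ still has entries bounded in modulus by $C$ and at most $w_\ell := \|A_\ell\|_0$ non-zero entries, with $\sum_{\ell=1}^L w_\ell \le n$, and $DF(x) = B_L B_{L-1} \cdots B_1$.

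The key step will be a \emph{pairing inequality}: for any two matrices $X, Y$ with entries of modulus at most $C$,
\[
  \|XY\|_{\ell^\infty \to \ell^\infty}
  \;\le\; C^2 \cdot \|Y\|_{0}.
\]
This follows from a short computation:
\(
  \max_i \sum_j |(XY)_{ij}|
  \le C^2 \max_i \sum_k \mathbf 1_{X_{ik}\neq 0}\cdot r_Y^{\mathrm{row}}(k)
  \le C^2 \sum_k r_Y^{\mathrm{row}}(k) = C^2 \|Y\|_0,
\)
where $r_Y^{\mathrm{row}}(k)$ counts the non-zeros in row $k$ of $Y$. The essential feature is that \emph{two} consecutive matrices are bundled together at the price of the non-zero count of only \emph{one} of them.

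I would then group the product $B_L B_{L-1} \cdots B_1$ from the output side into pairs $(B_L, B_{L-1}), (B_{L-2}, B_{L-3}), \ldots$, and apply the pairing inequality together with submultiplicativity of $\|\cdot\|_{\ell^\infty \to \ell^\infty}$. For $L$ even this produces $L/2$ pairs and yields $\|DF(x)\|_{\ell^\infty \to \ell^\infty} \le C^L \prod_{k\text{ odd},\,k \le L-1} w_k$. For $L$ odd the grouping leaves $B_1$ unpaired, and I would bound it separately via $\|B_1\|_{\ell^\infty \to \ell^\infty} \le \|A_1\|_{\ell^\infty \to \ell^\infty} \le C\,d$, since every row of $A_1$ has at most $d$ entries (as $A_1 \in \R^{N_1 \times d}$). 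In each case the remaining product runs over exactly $\lfloor L/2 \rfloor$ sparsity counts $w_k$ whose sum is at most $n$, so AM--GM supplies $\prod w_k \le n^{\lfloor L/2 \rfloor}$. Collecting constants—and using $d \ge 1$ in the even case—gives $\Lip(F) \le d \cdot C^L \cdot n^{\lfloor L/2 \rfloor}$.

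The main obstacle is obtaining the sharp exponent $\lfloor L/2 \rfloor$ on $n$. A naive layer-by-layer bound $\|A_\ell\|_{\ell^\infty \to \ell^\infty} \le C \cdot \|A_\ell\|_0$ leaves an unacceptable $n^L$, while a Frobenius/$\ell^2$-based chain yields $n^{L/2}$, which is still too weak whenever $L$ is odd. It is precisely the pairing inequality—together with the parity-sensitive, separate treatment of $B_1$ via the dimension $d$—that extracts the exponent $\lfloor L/2 \rfloor$ and, in particular, realizes the half-layer saving in the odd-$L$ case.
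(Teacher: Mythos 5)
The paper does not reproduce a proof of this lemma; it is cited verbatim from \cite[Lemma~4.1]{GrohsTheoryToPracticeGap}, so there is no internal argument to compare against. Judged on its own, your proof is correct, and the pairing idea is the natural (and almost certainly the referenced) approach.

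Two checks worth making explicit. The pairing inequality holds because
\[
  \max_i \sum_j |(XY)_{ij}|
  \le \max_i \sum_k |X_{ik}| \sum_j |Y_{kj}|
  \le C^2 \max_i \sum_k \mathbf{1}_{X_{ik}\neq 0}\, r_Y^{\mathrm{row}}(k)
  \le C^2 \|Y\|_0,
\]
exactly as you compute, and the absorption $B_\ell = D_\ell(x) A_\ell$ can only delete rows, so indeed $\|B_\ell\|_0 \le \|A_\ell\|_0 = w_\ell$ and $\WeightSize{\cdot}$ is preserved. The parity split is the essential point: pairing \emph{from the output side} so that, for odd $L$, the lone unpaired factor is the \emph{input} matrix $B_1 = D_1 A_1$, whose row sums are bounded by $C \cdot d$ rather than by $C \cdot n$, is exactly what converts the exponent $L/2$ into $\lfloor L/2 \rfloor$. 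Had you left a different orphan, or paired from the input side, the bound would degrade by a factor $n^{1/2}$ for odd $L$. Two small remarks: you only need the inequality $\Lip(F) \le \sup_x \|DF(x)\|_{\ell^\infty\to\ell^\infty}$ (integrating the piecewise-constant gradient along segments in the convex set $[0,1]^d$), not equality; and the final step needs no AM--GM, since each $w_k \le \sum_\ell w_\ell \le W(\Phi) \le n$ directly yields $\prod w_k \le n^{\lfloor L/2\rfloor}$ over the $\lfloor L/2\rfloor$ factors.
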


Now, we can derive a sufficient condition for the embedding
$\ApproxSpace([0,1]^d) \hookrightarrow C([0,1]^d)$.
Below, in Section~\ref{sec:necessary}, we will see that this sufficient condition
is essentially optimal.

\begin{theorem}\label{thm:SobolevTypeEmbeddingSufficient}
  Let $d \in \N$ and $p \in (0,\infty)$,
  and suppose that $\gamma^{\ast}(\LayerFunc,\CoeffFunc) < \infty$.
  If $\alpha > \frac{d}{p} \cdot \gamma^{\ast}(\LayerFunc,\CoeffFunc)$,
  then $\ApproxSpace ([0,1]^d) \hookrightarrow C([0,1]^d)$.
  Furthermore, for each
  $\gamma \in \bigl( \gamma^{\ast}(\LayerFunc,\CoeffFunc), \frac{p}{d} \, \alpha \bigr)$,
  we have the embedding
  \[
    \ApproxSpace([0,1]^d)
    \hookrightarrow \ApproxSpace[\infty][\frac{p}{d+p}(\alpha - \frac{d}{p}\gamma)] ([0,1]^d).
  \]
\end{theorem}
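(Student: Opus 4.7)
The plan is to build a telescoping decomposition of $f$ in terms of near-optimal network approximants and to convert $L^p$ approximation rates into $L^\infty$ rates via Lemma~\ref{lem:LipschitzLpVersusLInfty}. By the scaling property in Lemma~\ref{lem:ApproximationSpaceProperties}, it suffices to work under the normalization $\PreNorm(f) \le 1$ and to show that $\PreNorm[\infty]<\alpha'>(f)$ is bounded by an absolute constant, where
\[
  \alpha'
  := \frac{p\alpha - d\gamma}{d+p}
  =  \frac{p}{d+p}\Bigl(\alpha - \frac{d}{p}\gamma\Bigr)
\]
for an arbitrary fixed $\gamma \in (\gamma^\ast, p\alpha/d)$. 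Under this normalization I may select, for each $n \in \N$, a network $F_n \in \NNSigma$ with $\|f - F_n\|_{L^p} \le 2 n^{-\alpha}$. Lemma~\ref{lem:GammaProperties} supplies a constant $C_1 > 0$ with $(\CoeffFunc(n))^{\LayerFunc(n)} n^{\lfloor \LayerFunc(n)/2\rfloor} \le C_1 n^\gamma$ for every $n$, and Lemma~\ref{lem:NetworkLipschitzEstimate} then yields $\Lip(F_n) \le d\, C_1\, n^\gamma$.

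Next I would analyse the dyadic differences $f_k := F_{2^{k+1}} - F_{2^k}$ for $k \ge 0$. The $L^p$ quasi-triangle inequality yields $\|f_k\|_{L^p} \lesssim 2^{-k\alpha}$, while sublinearity of the Lipschitz seminorm gives $\Lip(f_k) \lesssim 2^{k\gamma}$. Applying Lemma~\ref{lem:LipschitzLpVersusLInfty} with the scale
\[
  T_k
  := 2^{-k(\alpha+\gamma)p/(d+p)}
  \in (0,1] ,
\]
chosen precisely to equalise the two terms in the bound, produces $\|f_k\|_{L^\infty} \lesssim 2^{-k \alpha'}$. Positivity of $\alpha'$ is equivalent to $\gamma < p\alpha/d$, and the existence of such a $\gamma > \gamma^\ast$ is exactly the hypothesis $\alpha > \frac{d}{p}\gamma^\ast$.

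Since $(2^{-k\alpha'})_{k\ge 0}$ is summable, the series $\sum_k f_k$ converges uniformly on $[0,1]^d$, so $(F_{2^k})_{k \in \N}$ is Cauchy in $L^\infty$ and converges uniformly to a continuous function $g$. Because $F_{2^k} \to f$ also in $L^p$, one has $f = g$ almost everywhere; thus $f$ admits a continuous representative, proving $\ApproxSpace([0,1]^d) \hookrightarrow C([0,1]^d)$. The telescoping tail bound $\|f - F_{2^k}\|_{L^\infty} \le \sum_{j \ge k} \|f_j\|_{L^\infty} \lesssim 2^{-k\alpha'}$, combined with monotonicity of $\NNSigma$ in $n$ (so that $F_{2^k} \in \NNSigma$ whenever $n \ge 2^k$), then yields $d_\infty(f, \NNSigma) \lesssim n^{-\alpha'}$ for all $n$. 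The same telescoping sum, started from $F_1$ (whose $L^\infty$ norm is controlled by one further application of Lemma~\ref{lem:LipschitzLpVersusLInfty}), also produces a uniform bound on $\|f\|_{L^\infty}$; altogether $\PreNorm[\infty]<\alpha'>(f) \lesssim 1$, which is the desired embedding.

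The main obstacle is the balancing in Lemma~\ref{lem:LipschitzLpVersusLInfty}: the competing exponents of $T$ must combine with the competing rates $2^{-k\alpha}$ and $2^{k\gamma}$ to produce a strictly positive decay exponent, and the arithmetic of this balance is exactly what dictates the final rate $\alpha' = \frac{p}{d+p}\bigl(\alpha - \frac{d}{p}\gamma\bigr)$ and explains the loss of rate as $\gamma \downarrow \gamma^\ast$. Everything else---the Lipschitz bound on network realisations from Lemma~\ref{lem:NetworkLipschitzEstimate}, the dyadic telescoping, and the passage from $\PreNorm$ to the quasi-norm via Lemma~\ref{lem:ApproximationSpaceProperties}---is routine bookkeeping.
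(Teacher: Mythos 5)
Your proof is correct and follows essentially the same route as the paper's: a dyadic telescoping decomposition of $f$ via near-optimal networks $F_{2^k} \in \NNSigma[2^k]$, the Lipschitz bound of Lemma~\ref{lem:NetworkLipschitzEstimate} combined with $\gamma^\ast = \gamma^\diamondsuit$ to control $\Lip(F_{2^{k+1}} - F_{2^k}) \lesssim 2^{k\gamma}$, and Lemma~\ref{lem:LipschitzLpVersusLInfty} with the balancing scale $T_k = 2^{-k(\alpha+\gamma)p/(d+p)}$ to obtain the decay rate $\alpha' = \frac{p}{d+p}(\alpha - \frac{d}{p}\gamma)$ in $L^\infty$. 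The only cosmetic difference is that the paper sets $F_0 := 0$ (justified by $\|f\|_{L^p} \le 1$) to start the telescope cleanly, whereas you propose a separate application of Lemma~\ref{lem:LipschitzLpVersusLInfty} to bound $\|F_1\|_{L^\infty}$; both work.
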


\begin{proof}
  As shown in \Cref{lem:GammaProperties}, the assumption
  $\gamma^{\ast}(\LayerFunc,\CoeffFunc) < \infty$
  ensures that $L := \LayerFunc^\ast < \infty$.
  Furthermore, since
  \(
    \gamma^{\diamondsuit}(\LayerFunc,\CoeffFunc)
    = \gamma^{\ast}(\LayerFunc,\CoeffFunc)
    < \frac{p}{d} \alpha
  \)
  we see by definition of $\gamma^{\diamondsuit}$ (see \Cref{eq:GammaDefinition})
  that there exist $\gamma < \frac{p}{d} \alpha$
  (arbitrarily close to $\gamma^{\ast}(\LayerFunc,\CoeffFunc)$)
  and $C = C(\gamma,\LayerFunc,\CoeffFunc) > 0$ satisfying
  $(\CoeffFunc(n))^{L} \cdot n^{\lfloor L/2 \rfloor} \leq C \cdot n^{\gamma}$ for all $n \in \N$.

  Let $f \in \ApproxSpace([0,1]^d)$ with $\| f \|_{\ApproxSpace} \leq 1$.
  By \Cref{lem:ApproximationSpaceProperties}, this implies $\PreNorm(f) \leq 1$,
  so that for each $m \in \N$, we can choose $F_m \in \NNSigma[2^m]$ satisfying
  $\| f - F_m \|_{L^p} \leq 2 \cdot 2^{-\alpha m}$.
  Since furthermore $\| f \|_{L^p} \leq 1$, this remains true for $m = 0$ if we set $F_0 := 0$.
  Note that there exists a constant $C_1 = C_1(p) > 0$ satisfying
  \[
    \| F_{m+1} - F_m \|_{L^p}
    \leq C_1 \cdot \big( \| F_{m+1} - f \|_{L^p} + \| f - F_m \|_{L^p} \big)
    \leq C_1 \cdot \big( 2 \cdot 2^{-\alpha(m+1)} + 2 \cdot 2^{-\alpha m} \big)
    \leq 4 C_1 \cdot 2^{- \alpha m} .
  \]
  Furthermore, note as a consequence of \Cref{lem:NetworkLipschitzEstimate} and because of
  $\LayerFunc(n) \leq L$ for all $n \in \N$ that
  \begin{align*}
    \Lip(F_{m+1} - F_m)
    & \leq \Lip(F_{m+1}) + \Lip(F_m)
      \leq d \cdot \bigl(
                     [\CoeffFunc(2^{m+1})]^{L} \, (2^{m+1})^{\lfloor L/2 \rfloor}
                     + [\CoeffFunc(2^{m})]^{L} \, (2^{m})^{\lfloor L/2 \rfloor}
                   \bigr) \\
    & \leq d \, C \cdot \bigl( (2^{m+1})^\gamma + (2^m)^\gamma \bigr)
      \leq C_2 \cdot 2^{\gamma m}
  \end{align*}
  for a suitable constant $C_2 = C_2(d,\gamma,\LayerFunc,\CoeffFunc) > 0$.

  Now, set $\theta := \frac{p}{p+d} (\gamma + \alpha) > 0$.
  Then, for each $m \in \N$, we can apply \Cref{lem:LipschitzLpVersusLInfty}
  with $T = T_m := 2^{- m \theta} \in (0,1]$ to deduce
  for a suitable constant $C_3 = C_3(d,p) > 0$ that
  \begin{align*}
    \| F_{m+1} - F_m \|_{L^\infty}
    & \leq C_3 \cdot \bigl(
                       T_m^{-d/p} \, \| F_{m+1} - F_m \|_{L^p} + T_m \cdot \Lip(F_{m+1} - F_m)
                     \bigr) \\
    & \leq C_3 \cdot \bigl(
                       4 C_1 \cdot 2^{\frac{d}{p} \theta m} \cdot 2^{-\alpha m}
                       + C_2 \cdot 2^{-\theta m} \cdot 2^{\gamma m}
                     \bigr) \\
    & \leq C_4 \cdot 2^{- \mu m} ,
  \end{align*}
  for a suitable constant $C_4 = C_4(d,p,\gamma,\LayerFunc,\CoeffFunc) > 0$
  and $\mu := \frac{p}{d+p} (\alpha - \frac{d}{p}\gamma) > 0$.
  Therefore, we see for $M \geq m \geq m_0$ that
  \begin{equation}
    \| F_M - F_m \|_{L^\infty}
    \leq \sum_{\ell=m}^{M-1}
           \| F_{\ell+1} - F_\ell \|_{L^\infty}
    \leq C_4 \sum_{\ell=m_0}^\infty 2^{- \mu \ell}
    \xrightarrow[m_0\to\infty]{} 0 ,
    \label{eq:SobolevTypeEmbeddingSufficient1}
  \end{equation}
  showing that $(F_m)_{m \in \N} \subset C([0,1]^d)$ is a Cauchy sequence.
  Since $F_m \to f$ in $L^p$, this implies (after possibly redefining $f$ on a null-set)
  that $f \in C([0,1]^d)$ and $F_m \to f$ uniformly.
  Finally, we see
  \begin{equation}
    \| f \|_{L^\infty}
    \!=\!\! \lim_{m \to \infty} \| F_m \|_{L^\infty}
    \!\leq\!\! \lim_{m \to \infty} \sum_{\ell=0}^{m-1} \| F_{\ell+1} - F_\ell \|_{L^\infty}
    \!\leq C_4 \cdot \sum_{\ell=0}^\infty 2^{- \mu \ell}
    \leq C_5 = C_5 (d,p,\gamma,\alpha,\LayerFunc,\CoeffFunc) .
    \label{eq:SobolevTypeEmbeddingSufficient2}
  \end{equation}
  Since this holds for all $f \in \ApproxSpace ([0,1]^d)$ with $\| f \|_{\ApproxSpace} \leq 1$,
  we have shown $\ApproxSpace ([0,1]^d) \hookrightarrow C([0,1]^d)$.

  \medskip{}

  We complete the proof by showing
  $\ApproxSpace ([0,1]^d) \hookrightarrow \ApproxSpace[\infty][\mu]([0,1]^d)$.
  By definition of $\mu$ and since $\gamma < \frac{p}{d}\alpha$ can be chosen arbitrarily
  close to $\gamma^{\ast}(\LayerFunc,\CoeffFunc)$, this implies the second claim of the theorem.
  To see $\ApproxSpace([0,1]^d) \hookrightarrow \ApproxSpace[\infty][\mu]([0,1]^d)$,
  note thanks to \Cref{eq:SobolevTypeEmbeddingSufficient1} and because of $F_m \to f$ uniformly that
  \[
    \| f - F_m \|_{L^\infty}
    = \lim_{M\to\infty} \| F_M - F_m \|_{L^\infty}
    \leq C_4 \sum_{\ell=m}^\infty 2^{-\mu \ell}
    \leq \frac{C_4}{1 - 2^{-\mu}} 2^{- \mu m}
    =:   C_6 \cdot 2^{- \mu m} ,
  \]
  for a suitable constant $C_6 = C_6(d,p,\gamma,\alpha,\LayerFunc,\CoeffFunc)$.

  Now, given any $t \in \N_{\geq 2}$, choose $m \in \N$ with $2^m \leq t \leq 2^{m+1}$.
  Since $F_m \in \NNSigma[2^m] \subset \NNSigma[t]$, we then see
  \(
    t^\mu \, d_\infty (f, \NNSigma[t])
    \leq t^\mu \, \| f - F_m \|_{L^\infty}
    \leq 2^{\mu (m+1)} C_6 \cdot 2^{- \mu m}
    =    2^\mu C_6 .
  \)
  For $t = 1$, we have $t^\mu \, d_\infty(f, \NNSigma[t]) \leq \| f \|_{L^\infty} \leq C_5$;
  see \Cref{eq:SobolevTypeEmbeddingSufficient2}.
  Overall, setting $C_7 := \max \{ 1, 2^\mu C_6, C_5 \}$ and recalling
  \Cref{lem:ApproximationSpaceProperties}, we see $\PreNorm[\infty]<\mu>(C_7^{-1} f) \leq 1$
  and hence $\| C_7^{-1} f \|_{\ApproxSpace[\infty][\mu]} \leq 1$, meaning
  $\| f \|_{\ApproxSpace[\infty][\mu]} \leq C_7$.
  All in all, this shows that indeed
  $\ApproxSpace([0,1]^d) \hookrightarrow \ApproxSpace[\infty][\mu] ([0,1]^d)$.
\end{proof}

\subsection{Sufficient Conditions for Embedding into \texorpdfstring{$C^{0,\beta}([0,1]^d)$}{Hölder spaces}}

Our next goal is to prove a sufficient condition for the embedding
$\ApproxSpace([0,1]^d) \hookrightarrow C^{0,\beta}([0,1]^d)$.
We first introduce some notation. 
For $d\in \mathbb{N}$, $\beta\in (0,1]$ and $f:[0,1]^d\to \mathbb{R}$ we define
\[
  \Lip_\beta (f) := \sup_{x,y \in [0,1]^d, x \neq y}
                      \frac{|f(x) - f(y)|}{\|x - y\|_{\ell^\infty}^\beta}
                 \in [0,\infty].
\]
Then, the Hölder space $C^{0,\beta}([0,1]^d)$ consists of all (necessarily continuous)
functions $f : [0,1]^d \to \R$ satisfying
\[
  \| f \|_{C^{0,\beta}}
  := \| f \|_{L^\infty} + \Lip_\beta(f)
  <  \infty .
\]
It is well-known that $C^{0,\beta}([0,1]^d)$ is a Banach space with this norm.

The proof of our sufficient embedding condition
relies on the following analogue of \Cref{lem:LipschitzLpVersusLInfty}.

\begin{lemma}\label{lem:HoelderConstantBound}
  Let $p \in (0,\infty]$, $\beta \in (0,1)$, and $d \in \N$.
  Then there exists a constant $C = C(d,p,\beta) \!>\! 0$
  such that for each $0 < T \leq 1$ and each $f \in L^p([0,1]^d) \cap C^{0,1}([0,1]^d)$, we have
  \[
    \Lip_\beta (f)
    \leq C \cdot \big( T^{-\frac{d}{p} - \beta} \| f \|_{L^p} + T^{1 - \beta} \Lip(f) \big).
  \]
\end{lemma}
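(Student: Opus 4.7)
The strategy is to decouple small-scale behavior (controlled by the ordinary Lipschitz constant) from large-scale behavior (controlled by the $L^\infty$ norm, which in turn is controlled by the previous lemma). Concretely, I would split the supremum defining $\Lip_\beta(f)$ into two regimes depending on the separation $\|x-y\|_{\ell^\infty}$ relative to the threshold $T$.

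In the \emph{near regime} $\|x-y\|_{\ell^\infty} \leq T$, I would use the elementary trick
\[
\frac{|f(x)-f(y)|}{\|x-y\|_{\ell^\infty}^\beta}
= \frac{|f(x)-f(y)|}{\|x-y\|_{\ell^\infty}} \cdot \|x-y\|_{\ell^\infty}^{1-\beta}
\leq \Lip(f) \cdot T^{1-\beta},
\]
using $1-\beta > 0$ to monotonically bound $\|x-y\|_{\ell^\infty}^{1-\beta}$ by $T^{1-\beta}$. In the \emph{far regime} $\|x-y\|_{\ell^\infty} > T$, I would instead use the trivial bound $|f(x)-f(y)| \leq 2 \|f\|_{L^\infty}$ together with $\|x-y\|_{\ell^\infty}^{-\beta} < T^{-\beta}$ to obtain
\[
\frac{|f(x)-f(y)|}{\|x-y\|_{\ell^\infty}^\beta} \leq 2 \, T^{-\beta} \, \|f\|_{L^\infty}.
\]
For $p \in (0,\infty)$ I then invoke \Cref{lem:LipschitzLpVersusLInfty} to get $\|f\|_{L^\infty} \leq C'(T^{-d/p}\|f\|_{L^p} + T \, \Lip(f))$, which after multiplication by $2T^{-\beta}$ produces precisely the two terms $T^{-d/p-\beta}\|f\|_{L^p}$ and $T^{1-\beta}\Lip(f)$ appearing on the right-hand side. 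The case $p=\infty$ needs a separate line of comment: there $\|f\|_{L^\infty} = \|f\|_{L^p}$ trivially, and the term $T^{-d/p-\beta}$ is interpreted as $T^{-\beta}$, so the estimate holds directly with $C=2$.

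Taking the supremum over all $x \neq y$ and combining the bounds from both regimes gives $\Lip_\beta(f) \leq C(d,p,\beta) \cdot (T^{-d/p-\beta}\|f\|_{L^p} + T^{1-\beta}\Lip(f))$, as claimed. I do not anticipate a genuine obstacle here: the proof is a straightforward adaptation of \Cref{lem:LipschitzLpVersusLInfty}, and the only thing to be careful about is that the threshold-splitting must use the \emph{same} $T$ in both regimes so that the two competing bounds share the parameter whose optimization the calling theorem will later exploit. Everything else reduces to tracking constants and handling the mild notational nuisance of the $p=\infty$ endpoint.
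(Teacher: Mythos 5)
Your proof is correct, but it takes a genuinely different route from the paper's. You split the supremum defining $\Lip_\beta(f)$ into a near regime ($\|x-y\|_{\ell^\infty} \le T$), controlled directly by $T^{1-\beta}\Lip(f)$, and a far regime ($\|x-y\|_{\ell^\infty} > T$), controlled by $2T^{-\beta}\|f\|_{L^\infty}$, and only then invoke \Cref{lem:LipschitzLpVersusLInfty} for the far-regime $L^\infty$ bound when $p<\infty$. The paper instead starts from the multiplicative interpolation inequality $\Lip_\beta(f) \le 2^{1-\beta}\,\|f\|_{L^\infty}^{1-\beta}\,(\Lip(f))^\beta$, plugs in the bound from \Cref{lem:LipschitzLpVersusLInfty}, expands via $(a+b)^\theta \le a^\theta + b^\theta$, and then dispatches the resulting mixed term $T^{-(1-\beta)d/p}\|f\|_{L^p}^{1-\beta}(\Lip(f))^\beta$ using the convexity (Young-type) inequality $a^\theta b^{1-\theta} \le \theta a + (1-\theta)b$. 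Your regime-splitting argument is more elementary --- it needs no convexity or subadditivity lemma, just monotonicity of $t\mapsto t^{1-\beta}$ and $t\mapsto t^{-\beta}$ --- and it makes the role of the threshold $T$ geometrically transparent. The paper's approach, on the other hand, goes through the standard multiplicative bound $\Lip_\beta \lesssim \|\cdot\|_{L^\infty}^{1-\beta}\Lip^\beta$, which is a reusable observation in its own right, at the cost of an extra algebraic step to convert the product into a sum with the prescribed powers of $T$. Both yield exactly the stated estimate, with the same treatment of the $p=\infty$ endpoint; your proof has no gap.
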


\begin{proof}
  For $x,y \in [0,1]^d$, we have
  \(
    |f(x) - f(y)|^{1-\beta}
    \leq \big( 2 \, \| f \|_{L^\infty} \big)^{1-\beta}
  \)
  and hence
  \[
    |f(x) - f(y)|
    = |f(x) - f(y)|^\beta \cdot |f(x) - f(y)|^{1-\beta}
    \leq 2^{1-\beta} \,
         \| f \|_{L^\infty}^{1-\beta}
         \cdot \big( \Lip(f) \cdot \|x - y\|_{\ell^\infty} \big)^\beta ,
  \]
  and this easily implies
  $\Lip_\beta(f) \leq 2^{1-\beta} \cdot \| f \|_{L^\infty}^{1-\beta} \cdot (\Lip(f))^\beta$.

  Let us first consider the case $p < \infty$.
  In this case, we combine the preceding bound with the estimate of $\| f \|_{L^\infty}$
  provided by \Cref{lem:LipschitzLpVersusLInfty} to obtain a constant $C_1 = C_1(d,p) > 0$
  satisfying
  \begin{align*}
    \Lip_\beta(f)
    & \leq 2^{1-\beta} C_1^{1-\beta}
           \cdot \bigl( T^{-d/p} \, \| f \|_{L^p} + T \cdot \Lip(f) \bigr)^{1-\beta}
           \cdot \big( \Lip(f) \big)^\beta \\
    & \leq 2^{1-\beta} C_1^{1-\beta}
           \cdot \big(
                   T^{-(1-\beta) d / p}
                   \cdot \| f \|_{L^p}^{1-\beta}
                   \cdot (\Lip(f))^\beta
                   + T^{1-\beta} \cdot \Lip(f)
                 \big)
           .
  \end{align*}
  The second step used the elementary estimate $(a + b)^\theta \leq a^\theta + b^\theta$
  for $\theta \in (0,1)$ and $a,b \geq 0$.

  The preceding estimate shows that it suffices to prove that
  \[
    T^{-(1-\beta) d / p}
    \cdot \| f \|_{L^p}^{1-\beta}
    \cdot (\Lip(f))^\beta
    \leq C_2
         \cdot \big(
                 T^{-\frac{d}{p} - \beta} \| f \|_{L^p}
                 + T^{1-\beta} \Lip(f)
               \big)
  \]
  for a suitable constant $C_2 = C_2(d,p,\beta) > 0$.
  To prove the latter estimate, note for $a, b > 0$ and $\theta \in (0,1)$ that
  \begin{equation}
    a^\theta \cdot b^{1-\theta}
    = \exp \bigl(\theta \ln a + (1-\theta) \ln b \bigr)
    \leq \theta \exp(\ln a) + (1-\theta) \exp(\ln b)
    =    \theta a + (1-\theta) b
    ,
    \label{eq:YoungTypeInequality}
  \end{equation}
  thanks to the convexity of $\exp$.
  The preceding estimate clearly remains valid if $a = 0$ or $b = 0$.
  Applying this with $\theta = \beta$, $a = T^{1-\beta} \Lip(f)$,
  and $b = T^{-\frac{d}{p}-\beta} \| f \|_{L^p}$, we thus obtain the needed estimate
  \begin{align*}
    T^{-(1-\beta) d / p}
    \cdot \| f \|_{L^p}^{1-\beta}
    \cdot (\Lip(f))^\beta
    & = \big( T^{1-\beta} \Lip(f) \big)^\beta
        \cdot \big( T^{-\frac{d}{p} - \beta} \, \| f \|_{L^p} \big)^{1-\beta} \\
    & \leq \beta \, T^{1-\beta} \Lip(f) + (1-\beta) \, T^{-\frac{d}{p} - \beta} \, \| f \|_{L^p} \\
    & \leq T^{1-\beta} \Lip(f) + T^{-\frac{d}{p} - \beta} \, \| f \|_{L^p}
    .
  \end{align*}
  This completes the proof in the case $p < \infty$.

  In case of $p = \infty$, we combine the estimate from the beginning of the proof
  with \Cref{eq:YoungTypeInequality} to obtain
  \begin{align*}
    \Lip_\beta(f)
    & \leq 2^{1-\beta} \cdot \| f \|_{L^\infty}^{1-\beta} \cdot (\Lip(f))^\beta
      =    2^{1-\beta}
           \cdot \bigl( T^{-\beta} \, \| f \|_{L^\infty} \bigr)^{1-\beta}
           \cdot \bigl(T^{1-\beta} \, \Lip(f)\bigr)^\beta \\
    & \leq 2^{1-\beta}
           \cdot \big(
                   (1-\beta) T^{-\beta} \| f \|_{L^\infty}
                   + \beta T^{1-\beta} \, \Lip(f)
                 \big) \\
    & \leq 2^{1-\beta}
           \cdot \big(
                   T^{-\frac{d}{p} - \beta} \| f \|_{L^p}
                   + T^{1-\beta} \Lip(f)
                 \big) ,
  \end{align*}
  where the last step used that $p = \infty$.
\end{proof}

We now prove our sufficient criterion for the embedding
$\ApproxSpace([0,1]^d) \hookrightarrow C^{0,\beta}([0,1]^d)$.

\begin{theorem}\label{thm:NNIntoHoelderSpace}
  Let $d \in \N$, $\beta \in (0,1)$, and $p \in (0,\infty]$,
  and suppose that $\gamma^\ast(\LayerFunc,\CoeffFunc) < \infty$.
  If
  \[
    \alpha
    > \frac{\beta + \frac{d}{p}}{1 - \beta}
      \cdot \gamma^\ast(\LayerFunc,\CoeffFunc)
    ,
  \]
  then $\ApproxSpace([0,1]^d) \hookrightarrow C^{0,\beta}([0,1]^d)$.
\end{theorem}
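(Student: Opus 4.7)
The plan is to mirror the approach of Theorem~\ref{thm:SobolevTypeEmbeddingSufficient}, with Lemma~\ref{lem:HoelderConstantBound} taking the role previously played by Lemma~\ref{lem:LipschitzLpVersusLInfty}. Since $\gamma^\ast(\LayerFunc,\CoeffFunc) < \infty$, Lemma~\ref{lem:GammaProperties} gives $L := \LayerFunc^\ast < \infty$ and, by the definition of $\gamma^\diamondsuit$, we can pick some $\gamma$ slightly above $\gamma^\ast$ such that $(\CoeffFunc(n))^L \cdot n^{\lfloor L/2\rfloor} \leq C \cdot n^\gamma$ for all $n \in \N$. The hypothesis $\alpha > \frac{\beta + d/p}{1-\beta}\gamma^\ast$ means that, provided $\gamma$ is chosen close enough to $\gamma^\ast$, we still have $\alpha(1-\beta) > \gamma(\beta + d/p)$, which is precisely the condition needed to find $\theta > 0$ with $\frac{\gamma}{1-\beta} < \theta < \frac{\alpha}{\beta + d/p}$.

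With $\theta$ fixed, I would take $f \in \ApproxSpace([0,1]^d)$ with $\|f\|_{\ApproxSpace} \leq 1$ and select networks $F_m \in \NNSigma[2^m]$ with $\|f - F_m\|_{L^p} \leq 2 \cdot 2^{-\alpha m}$, also setting $F_0 := 0$. Exactly as in the proof of Theorem~\ref{thm:SobolevTypeEmbeddingSufficient}, the quasi-triangle inequality for $L^p$ gives $\|F_{m+1} - F_m\|_{L^p} \lesssim 2^{-\alpha m}$, and Lemma~\ref{lem:NetworkLipschitzEstimate} combined with our bound on $(\CoeffFunc(n))^L n^{\lfloor L/2 \rfloor}$ yields $\Lip(F_{m+1}-F_m) \lesssim 2^{\gamma m}$.

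Now I would apply Lemma~\ref{lem:HoelderConstantBound} to $F_{m+1} - F_m$ with the scale $T_m := 2^{-\theta m} \in (0,1]$, obtaining
\[
  \Lip_\beta(F_{m+1}-F_m)
  \lesssim 2^{(\beta + d/p)\theta m}\cdot 2^{-\alpha m}
         + 2^{-(1-\beta)\theta m}\cdot 2^{\gamma m}
  \lesssim 2^{-\mu m}
\]
with $\mu := \min\{\alpha - (\beta + d/p)\theta,\; (1-\beta)\theta - \gamma\} > 0$, by the choice of $\theta$. Telescoping gives, for $M > m$,
\[
  \Lip_\beta(F_M - F_m)
  \leq \sum_{\ell = m}^{M-1} \Lip_\beta(F_{\ell+1}-F_\ell)
  \lesssim \sum_{\ell \geq m} 2^{-\mu\ell},
\]
which tends to $0$ as $m \to \infty$. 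The same argument as in Theorem~\ref{thm:SobolevTypeEmbeddingSufficient} (using Lemma~\ref{lem:LipschitzLpVersusLInfty} for the $L^\infty$ bound on $F_{m+1}-F_m$) shows $(F_m)_{m \in \N}$ is Cauchy in $L^\infty$, hence $f$ has a continuous representative with $F_m \to f$ uniformly. Combined with the Cauchy property in the $\Lip_\beta$ seminorm, $(F_m)$ is Cauchy in $C^{0,\beta}([0,1]^d)$, so $f \in C^{0,\beta}([0,1]^d)$ and a telescoping estimate starting from $F_0 = 0$ bounds $\|f\|_{C^{0,\beta}}$ by a constant depending only on $d,p,\beta,\alpha,\LayerFunc,\CoeffFunc$. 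This yields the embedding.

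The main technical hurdle is simply the bookkeeping around the choice of $\theta$: one must verify that the open interval $\bigl(\frac{\gamma}{1-\beta},\frac{\alpha}{\beta+d/p}\bigr)$ is nonempty, which is exactly where the hypothesis on $\alpha$ is used, and that $\gamma > \gamma^\ast$ can be taken close enough to $\gamma^\ast$ to preserve this. Everything else is a direct adaptation of the scheme used for the embedding into $C([0,1]^d)$.
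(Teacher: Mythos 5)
Your argument is correct and follows essentially the same route as the paper: approximate by networks $F_m\in\NNSigma[2^m]$, telescope, control $\Lip_\beta(F_{m+1}-F_m)$ via Lemma~\ref{lem:HoelderConstantBound} at a dyadic scale $T_m=2^{-\theta m}$ for a well-chosen $\theta$, and sum a geometric series. The paper picks the explicit balanced $\theta=(\alpha+\gamma)/(1+d/p)$ rather than an arbitrary point of your interval, but that is cosmetic. One small wrinkle: for the uniform-convergence step you invoke Lemma~\ref{lem:LipschitzLpVersusLInfty}, which is stated only for $p<\infty$; for $p=\infty$ that step is trivial since $\|F_{m+1}-F_m\|_{L^\infty}=\|F_{m+1}-F_m\|_{L^p}\lesssim 2^{-\alpha m}$. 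The paper instead deduces the sup-norm bound from the already-established $\Lip_\beta$ bound via $|g(x)|\leq\Lip_\beta(g)+|g(y)|$ and taking the $L^p$ norm in $y$, which covers all $p\in(0,\infty]$ at once; you may find it cleaner to use that observation than to run Lemma~\ref{lem:LipschitzLpVersusLInfty} a second time.
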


\begin{proof}
  The assumption on $\alpha$ implies in particular
  that $\alpha > \frac{d}{p} \gamma^\ast(\LayerFunc,\CoeffFunc)$.
  Hence, \Cref{thm:SobolevTypeEmbeddingSufficient} shows
  in case of $p < \infty$ that $\ApproxSpace([0,1]^d) \hookrightarrow C([0,1]^d)$,
  and this embedding trivially holds if $p = \infty$.
  Thus, it remains to show for $f \in \ApproxSpace([0,1]^d)$
  with $\| f \|_{\ApproxSpace} \leq 1$ that $\Lip_\beta (f) \lesssim 1$.

  Let $\nu := (\beta + \frac{d}{p}) / (1 - \beta)$.
  Recall from \Cref{lem:GammaProperties} and the assumptions of the present theorem that
  \(
    \gamma^{\diamondsuit}(\LayerFunc,\CoeffFunc)
    = \gamma^\ast(\LayerFunc,\CoeffFunc)
    < \alpha / \nu
    < \infty
  \)
  and hence that $L := \LayerFunc^\ast \in \N$.
  By definition of $\gamma^{\diamondsuit}(\LayerFunc,\CoeffFunc)$,
  we thus see that there exist $\gamma > 0$ and $C = C(\gamma,\LayerFunc,\CoeffFunc) > 0$
  satisfying $\gamma < \alpha / \nu$ and
  $(\CoeffFunc(n))^L \cdot n^{\lfloor L/2 \rfloor} \leq C \cdot n^{\gamma}$ for all $n \in \N$.

  Let $f \in \ApproxSpace([0,1]^d)$ with $\| f \|_{\ApproxSpace} \leq 1$.
  By \Cref{lem:ApproximationSpaceProperties}, this implies $\PreNorm(f) \leq 1$,
  meaning that for each $m \in \N$ we can choose $F_m \in \NNSigma[2^m]$
  satisfying $\| f - F_m \|_{L^p} \leq 2 \cdot 2^{-\alpha m}$.
  Since furthermore $\| f \|_{L^p} \leq 1$, this remains true for $m = 0$ if we set $F_0 := 0$.
  As in the proof of \Cref{thm:SobolevTypeEmbeddingSufficient}, we then obtain constants
  $C_1 = C_1(p) > 0$ and $C_2 = C_2(d,\gamma,\LayerFunc,\CoeffFunc) > 0$ satisfying
  \[
    \| F_{m+1} - F_m \|_{L^p} \leq 4 C_1 \cdot 2^{-\alpha m}
    \quad \text{and} \quad
    \Lip(F_{m+1} - F_m)
    \leq C_2 \cdot 2^{\gamma m}
    \quad \forall \, m \in \N .
  \]

  Now, set $\theta := (\alpha + \gamma)/(1 + \frac{d}{p})$.
  A direct computation shows that
  \[
    \theta \cdot \left(\frac{d}{p} + \beta\right) - \alpha
    = \gamma - \theta \cdot (1 - \beta)
    = \frac{1 - \beta}{1 + \frac{d}{p}} \cdot (\nu \gamma - \alpha)
    =: \mu
    < 0 .
  \]
  Thus, applying \Cref{lem:HoelderConstantBound} with $T = 2^{-\theta m}$, we obtain a constant
  $C_3 = C_3(d,p,\beta) > 0$ satisfying
  \begin{align*}
    \Lip_\beta (F_{m+1} - F_m)
    & \leq C_3
           \cdot \big(
                   T^{-\frac{d}{p} - \beta} \| F_{m+1} - F_m \|_{L^p}
                   + T^{1 - \beta} \, \Lip(F_{m+1} - F_m)
                 \big) \\
    & \leq C_3
           \cdot \big(
                   4 C_1 \cdot 2^{m \cdot (\theta \cdot (\frac{d}{p} + \beta) - \alpha)}
                   + C_2 \cdot 2^{m \cdot (\gamma - \theta \cdot (1 - \beta))}
                 \big) \\
    & =    C_3 \cdot (4C_1 + C_2) \cdot 2^{\mu m}
      =:   C_4 \cdot 2^{\mu m} .
  \end{align*}

  Next, note for $g \in C^{0,\beta}([0,1]^d)$ and $x,y \in [0,1]^d$ that
  \[
    |g(x)|
    \leq |g(x) - g(y)| + |g(y)|
    \leq \Lip_\beta(g) \cdot \|x - y\|_{\ell^\infty}^\beta + |g(y)|
    \leq \Lip_\beta (g) + |g(y)| .
  \]
  Taking the $L^p$-norm (with respect to $y$) of this estimate shows that
  \[
    |g(x)|
    = \big\| |g(x)| \big\|_{L^p([0,1]^d, d y)}
    \leq \big\| \Lip_\beta (g) + |g(y)| \big\|_{L^p([0,1]^d, d y)}
    \leq C_5 \cdot \bigl(\Lip_\beta (g) + \| g \|_{L^p}\bigr)
    \,\,\, \forall \, x \!\in\! [0,1]^d,
  \]
  for a suitable constant $C_5 = C_5(p) > 0$.
  Setting $\sigma := \max \{ \mu, -\alpha \} < 0$ and applying the preceding
  estimate to $g = F_{m+1} - F_m$, we see for a suitable constant $C_6 = C_6(d,p,\beta) > 0$
  that $\| F_{m+1} - F_m \|_{\sup} \leq C_6 \cdot 2^{\sigma m}$ for all $m \in \N_0$.
  Hence, the series $F := \sum_{m=0}^\infty (F_{m+1} - F_m)$ converges uniformly,
  so that we get $F \in C^{0,\beta}([0,1]^d)$ with
  \[
    \Lip_\beta(F)
    \leq \sum_{m=0}^\infty
           \Lip_\beta(F_{m+1} - F_m)
    \leq C_4 \cdot \sum_{m=0}^\infty
                     2^{\mu m}
    =    \frac{C_4}{1 - 2^\mu}
    =:   C_7
    <    \infty .
  \]
  Since $\sum_{m=0}^N (F_{m+1} - F_m) = F_{N+1} \to f$ with convergence in $L^p([0,1]^d)$
  as $N \to \infty$, we see that $F = f$ almost everywhere,
  and hence (after changing $f$ on a null-set) $f \in C^{0,\beta}([0,1]^d)$
  with $\Lip_\beta (f) \leq C_7$.
  As seen above, this completes the proof.
\end{proof}

\subsection{Necessary Conditions}\label{sec:necessary}

In this section we complement our sufficient conditions by corresponding necessary conditions.
Our main result reads as follows.

\begin{theorem}\label{thm:SobolevTypeEmbeddingNecessary}
  Let $p \in (0,\infty]$, $\beta \in (0,1]$, and $d \in \N$.
  Then:
  \begin{itemize}
    \item If $p < \infty$ and $\gamma^{\ast}(\LayerFunc,\CoeffFunc) > \alpha / (d / p)$,
          then the embedding $\ApproxSpace ([0,1]^d) \hookrightarrow C([0,1]^d)$ does \emph{not} hold.

    \item If $\gamma^\ast(\LayerFunc,\CoeffFunc) > \frac{1-\beta}{\beta + \frac{d}{p}} \alpha$,
          then the embedding $\ApproxSpace ([0,1]^d) \hookrightarrow C^{0,\beta}([0,1]^d)$
          does \emph{not} hold.
  \end{itemize}
\end{theorem}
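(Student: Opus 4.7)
The plan is to handle both non-embeddings by a single construction, treating the $C([0,1]^d)$ case as the formal $\beta = 0$ specialization of the $C^{0,\beta}$ case. The counterexample will have the form $f := \sum_{k \in \N} \phi_k$, where the $\phi_k$ are disjointly supported neural network realizations shaped as tall, narrow bumps at successively finer dyadic scales, with heights and widths tuned so that $f \in \ApproxSpace([0,1]^d)$ while either $\|f\|_{L^\infty} = \infty$ (first assertion) or $\Lip_\beta(f) = \infty$ (second assertion).

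The central ingredient is a converse to Lemma~\ref{lem:NetworkLipschitzEstimate}: the Lipschitz upper bound $C^L \cdot n^{\lfloor L/2 \rfloor}$ is attained, up to constants, by actual network realizations. Concretely, for any $\gamma < \gamma^\ast(\LayerFunc,\CoeffFunc)$ sufficiently close to $\gamma^\ast$ and every sufficiently large $k$, I would construct $\phi_k \in \NNSigma[2^k]$ whose graph is a localized bump of height $h_k$ and diameter $w_k := h_k \cdot 2^{-k\gamma}$, supported in a prescribed dyadic subcube $Q_k \subset [0,1]^d$, with the $Q_k$ pairwise disjoint. Such bumps arise from iterating a Telgarsky-type sawtooth inside one coordinate so as to saturate the factor $(\CoeffFunc(2^k))^{\LayerFunc^\ast} \cdot (2^k)^{\lfloor \LayerFunc^\ast / 2 \rfloor} \asymp 2^{k\gamma}$, composed with a localizing window; this is the ``matching lower bound'' construction from \cite{GrohsTheoryToPracticeGap}, and it is the technical heart of the argument.

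With $h_k := 2^{k\theta}$ for a free parameter $\theta \in \R$, the partial sum $f_K := \sum_{k \leq K} \phi_k$ is a parallelization of $\phi_1,\ldots,\phi_K$ and lies in $\NNSigma[C_0 \cdot 2^K]$ for some universal $C_0$. Disjointness of supports gives, for $p < \infty$, $\|f - f_K\|_{L^p}^p \asymp \sum_{k > K} h_k^p \, w_k^d = \sum_{k > K} 2^{k[\theta(p+d) - \gamma d]}$ (with the analogous supremum when $p = \infty$). Choosing $\theta$ strictly below the threshold $(\gamma d/p - \alpha)/(1 + d/p)$ (interpreted as $-\alpha$ when $p = \infty$) makes this series geometric and yields $\|f - f_K\|_{L^p} \lesssim 2^{-K\alpha}$, so $f \in \ApproxSpace([0,1]^d)$. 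On the other hand, peak-versus-edge evaluation within each bump gives $\Lip_\beta(\phi_k) \gtrsim h_k \cdot w_k^{-\beta} = 2^{k[\theta(1-\beta) + \gamma\beta]}$ and $\|\phi_k\|_{L^\infty} = h_k$, so the $C^{0,\beta}$-seminorm (respectively $L^\infty$-norm) of $f$ is infinite provided $\theta(1-\beta) + \gamma\beta > 0$ (respectively $\theta > 0$, the $\beta = 0$ case).

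Combining the upper bound on $\theta$ from the approximability constraint with the lower bound from the unboundedness constraint, an admissible $\theta$ exists precisely when $\gamma > (1-\beta)\alpha/(\beta + d/p)$, which for $\beta = 0$ and $p < \infty$ reduces to $\gamma > \alpha/(d/p)$. Under the hypothesis of the theorem we may pick such a $\gamma < \gamma^\ast(\LayerFunc,\CoeffFunc)$, and then the construction produces the desired counterexample. The main obstacle is the sharp bump construction itself: one must truly saturate the Lipschitz bound of Lemma~\ref{lem:NetworkLipschitzEstimate} while simultaneously localizing the bump into $Q_k$ and respecting all three budgets (weights, depth, and coefficient magnitudes) of $\NNSigma[2^k]$. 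Once this ingredient is available, the rest of the argument is dyadic-scale bookkeeping of geometric series.
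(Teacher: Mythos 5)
Your approach is genuinely different from the paper's and, modulo some details, is sound. The paper does \emph{not} build a single function $f$ with $\|f\|_{L^\infty} = \infty$; instead it exhibits a \emph{sequence} of scaled bumps $\frac{M_k}{M_k'} \zeta_{M_k'}$ where $\zeta_{M'}(x) = \varrho(1 - M' \sum_i x_i)$, each lying in the unit ball of $\ApproxSpace([0,1]^d)$ but with $\|\cdot\|_{C}$ or $\Lip_\beta(\cdot)$ diverging along $k$, which already disproves the bounded embedding. Your variant, summing disjointly supported bumps over all scales, proves the stronger statement that the set inclusion itself fails, at the cost of extra bookkeeping (localization of each bump into a distinct subcube $Q_k$, and parallelization of the $\phi_j$ for $j \leq K$ into a single network respecting all three budgets simultaneously). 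That bookkeeping is manageable — the shift to $Q_k$ changes the first-layer bias from $C/M'$ to $C(1/M' + \sum_i \tau_i) \leq C(1+d)$, which only costs a constant factor, and depths can be fixed at a single $L \leq \LayerFunc^\ast$ so parallelization does not require padding — but it is not free, and your proposal waves past it.

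The one place your proposal goes concretely wrong is in identifying the ``technical heart.'' You describe the saturating bump as arising from ``iterating a Telgarsky-type sawtooth,'' but a sawtooth of bounded depth $L$ and bounded coefficients produces at most $O(2^L)$ oscillations and thus Lipschitz constant $O(2^L)$; it does \emph{not} grow with the width $n$ or with $\CoeffFunc(n)$, so it cannot saturate $(\CoeffFunc(n))^L \cdot n^{\lfloor L/2 \rfloor}$. The construction that actually does the job (and that the paper carries out in Step~1 of its proof) is an \emph{amplifier}: take the single affine bump $\zeta_{M'}$, replicate its pre-activation $n$ times, sum with weights $C$ to gain a factor $C^2 n$, and iterate this fan-out/fan-in pattern $\lfloor L/2 \rfloor$ times. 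This has nothing to do with sawtooths or oscillations — a single tall narrow bump is exactly what is needed, both for your disjoint-bumps argument and for the paper's sequence argument. With the amplifier in place of the sawtooth, your dyadic bookkeeping and the resulting threshold $\gamma > (1-\beta)\alpha/(\beta + d/p)$ (with the $\beta = 0$, $p < \infty$ specialization $\gamma > \alpha p / d$) are correct and recover the theorem.
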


\begin{rem*}
  A combination of the preceding result with \Cref{thm:SobolevTypeEmbeddingSufficient}
  provides a \emph{complete characterization} of the embedding
  $\ApproxSpace ([0,1]^d) \hookrightarrow C([0,1]^d)$, except in the case where
  $\gamma^{\ast}(\LayerFunc,\CoeffFunc) = \alpha / (d/p)$.
  This remaining case is more subtle and seems to depend on the precise growth
  of the function $\CoeffFunc$.

  Likewise, combining the preceding result with \Cref{thm:NNIntoHoelderSpace}
  provides a \emph{complete characterization} of the embedding
  $\ApproxSpace ([0,1]^d) \hookrightarrow C^{0,\beta}([0,1]^d)$, except in the ``critical'' case
  where $\gamma^{\ast}(\LayerFunc,\CoeffFunc) = \frac{1-\beta}{\beta + \frac{d}{p}} \, \alpha$.
\end{rem*}

\begin{proof}
  The proof is divided into several steps, where the first two steps
  are concerned with constructing and analyzing a certain auxiliary function $\zeta_{M'}$
  which is then used to construct counterexamples to the considered embeddings.

  \medskip{}

  \textbf{Step~1:}
  Given $M' \geq 0$, define
  \[
    \zeta_{M'} : \quad
    \R^d \to \R, \quad
    x \mapsto \varrho \left(1 - M' \, \sum_{i=1}^d x_i\right).
  \]
  In this step, we show for any $n \in \N$, $0 \leq C \leq \CoeffFunc(n)$,
  $L \in \N_{\geq 2}$ with $L \leq \LayerFunc(n)$ and any $M' \geq 1$
  and $0 \leq M \leq C^L \, n^{\lfloor L/2 \rfloor}$ that
  \begin{equation}
    \tfrac{M}{M'} \cdot \zeta_{M'}
    \in \NNSigma[(d+L)n] .
    \label{eq:SobolevTypeEmbeddingNecessaryMainStep}
  \end{equation}
  To prove this, set $\mu := C^L \, n^{\lfloor L/2 \rfloor}$
  and $D := C \cdot (1)\in \R^{1 \times 1}$, as well as
  \[
    b_1 := \frac{C}{M'} \cdot (1,\dots,1)^T \in \R^n,
    \quad
    A := C \cdot (1,\dots,1) \in \R^{1 \times n},
    \quad
    B := C \cdot (1,\dots,1)^T \in \R^{n \times 1},
  \]
  and finally
  \[
    A_1
    := - C \cdot \left(
                   \begin{matrix}
                     1      & \cdots & 1 \\
                     \vdots & \ddots & \vdots \\
                     1      & \cdots & 1
                   \end{matrix}
                 \right)
    \in \R^{n \times d} .
  \]
  Note because of $M' \geq 1$ that
  \(
    \| b_1 \|_{\infty},
    \| A \|_{\infty},
    \| B \|_{\infty},
    \| D \|_{\infty},
    \| A_1 \|_{\infty}
    \leq C
    \leq \CoeffFunc(n)
    \leq \CoeffFunc ( (L+d) n)
    .
  \)
  Now, we distinguish two cases.

  \emph{Case~1 ($L \geq 2$ is even):}
  In this case, define
  \[
    \Phi := \big(
              (A_1,b_1),
              (A,0),
              \underbrace{
                (B,0),
                (A,0),
                \dots,
                (B,0),
                (A,0)
              }_{\frac{L-2}{2} \text{ copies of } (B,0),(A,0)}
            \big) .
  \]
  Note that $W(\Phi) \leq d n + n + n + \frac{L-2}{2} (n + n) \leq (d+L) n$
  and $L(\Phi) = L \leq \LayerFunc(n) \leq \LayerFunc( (d+L) n)$,
  showing that $R_\varrho \Phi \in \NNSigma[(d+L) n]$.
  Because of $A (\varrho (B x)) = C \sum_{j=1}^n \varrho(C x) = C^2 \, n \cdot \varrho(x)$
  for $x \in \R$ and because of
  \[
    A \cdot \varrho (A_1 x + b_1)
    = C \cdot \sum_{j=1}^n
                \varrho\bigl(\tfrac{C}{M'} - {\textstyle\sum_{i=1}^d} C \, x_i\bigr)
    = \frac{C^2 \, n}{M'}
      \cdot \varrho\bigl(1 - M' \, {\textstyle\sum_{i=1}^d} x_i\bigr)
    = \frac{C^2 \, n}{M'} \cdot \zeta_{M'} (x)
  \]
  for $x \in \R^d$, it follows that
  \(
    R_\varrho \Phi
    = C^{L-2}
      \cdot n^{\frac{L-2}{2}}
      \cdot \frac{C^2 \, n}{M'}
      \cdot \zeta_{M'}
    = \frac{\mu}{M'} \cdot \zeta_{M'} .
  \)
  Since $c \NNSigma[(d+L)n] \subset \NNSigma[(d+L)n]$ for all $c \in [-1,1]$
  and because of $0 \leq M \leq \mu$, this proves \Cref{eq:SobolevTypeEmbeddingNecessaryMainStep}
  in this case.

  \medskip{}

  \emph{Case~2 ($L \geq 3$ is odd):}
  In this case, define
  \[
    \Phi
    := \big(
         (A_1,b_1),
         (A,0),
         \underbrace{
           (B,0),(A,0),\dots,(B,0),(A,0)
         }_{\frac{L-3}{2} \text{ copies of } (B,0),(A,0)} ,
         (D,0)
       \big) .
  \]
  Then $W(\Phi) \leq d n + n + n + \frac{L-3}{2}(n + n) + 1 \leq (d + L) n$
  and $L(\Phi) = L \leq \LayerFunc(n) \leq \LayerFunc( (d+L) n)$,
  so that $R_\varrho \Phi \in \NNSigma[(d+L) n]$.
  Using essentially the same arguments as in the previous case, we see
  $R_\varrho \Phi = \frac{\mu}{M'} \cdot \zeta_{M'}$,
  which then implies that \Cref{eq:SobolevTypeEmbeddingNecessaryMainStep} holds.

  \medskip{}

  \textbf{Step~2:} We show
  $\big\| \zeta_{M'} \big\|_{L^p([0,1]^d)} \leq (M')^{-d/p}$.
  To see this, note for $x \in [0,1]^d$ that $\sum_{i=1}^d x_i \geq 0$ and hence
  $1 - M' \, \sum_{i=1}^d x_i \leq 1$, showing that
  $0 \leq \zeta_{M'}(x) = \varrho\bigl(1 - M' \, \sum_{i=1}^d x_i\bigr) \leq \varrho(1) = 1$.
  Next, note that if $0 \neq \zeta_{M'}(x) = \varrho \big( 1 - M' \, \sum_{i=1}^d x_i \big)$,
  then $1 - M' \, \sum_{i=1}^d x_i > 0$, which is only possible if
  $x_j \leq \sum_{i=1}^d x_i \leq \frac{1}{M'}$ for all $j \in \{ 1,\dots,d \}$,
  that is, if $x \in \frac{1}{M'} [0,1]^d$.
  Therefore, we see as claimed that
  \[
    \big\| \zeta_{M'} \big\|_{L^p([0,1]^d)}
    \leq \big[ \LebesgueMeasure\bigl(\tfrac{1}{M'} [0,1]^d\bigr) \big]^{1/p}
    =    (M')^{-d/p} .
  \]

  \medskip{}

  \textbf{Step~3:} In this step, we prove the first part of the theorem.
  By the assumptions of that part,
  we have $p < \infty$ and $\gamma^\ast (\LayerFunc,\CoeffFunc) > \alpha / (d/p)$.
  By definition of $\gamma^\ast$, this implies that there exists $\gamma > \alpha / (d/p)$
  and some $L \in \N_{\leq \LayerFunc^\ast}$ satisfying
  \[
    c_0
    := \limsup_{n \to \infty}
       \Big[
         (\CoeffFunc(n))^L \cdot n^{\lfloor L/2 \rfloor - \gamma}
       \Big]
    \in (0,\infty].
  \]
  Since $\LayerFunc^\ast \geq 2$, we can clearly assume that $L \geq 2$.
  Choose a subsequence $(n_k)_{k \in \N}$ such that
  $c_0 = \lim_{k \to \infty} (\CoeffFunc(n_k))^L \cdot n_k^{\lfloor L/2 \rfloor - \gamma}$.
  Then, setting $c := \min \{ 1, c_0 / 2 \}$, there exists $k_0 \in \N$ satisfying
  $(\CoeffFunc(n_k))^L \cdot n_k^{\lfloor L/2 \rfloor} \geq c \cdot n_k^\gamma > 0$
  and $L \leq \LayerFunc(n_k)$ for all $k \in \N_{\geq k_0}$.

  Choose $\kappa \in (0,1]$ satisfying $\kappa \leq c$ and $\kappa \cdot (d + L)^\alpha \leq 1$.
  Furthermore, define
  \[
    \theta
    := \tfrac{1}{2} (\gamma + \tfrac{\alpha p}{d}) \in (\tfrac{\alpha p}{d}, \gamma)
    \text{ and }
    \beta := \theta + \min \{ \delta_1, \delta_2 \}
    \quad \text{ where } \quad
    \delta_1 := \tfrac{\theta d}{p} - \alpha > 0
    \text{ and }
    \delta_2 := \gamma - \theta > 0
  \]
  and choose $k_1 \in \N$ satisfying $n_k^\theta \leq \kappa \cdot n_k^\beta$ for all $k \geq k_1$.

  We now claim for all $k \geq \max \{ k_0, k_1 \}$ that if we set $M := \kappa \cdot n_k^\beta$
  and $M' := n_k^\theta$, then
  $\big\| \frac{M}{M'} \, \zeta_{M'} \big\|_{\ApproxSpace([0,1]^d)} \leq 1$.
  This will prove the claim, since
  $\big\| \frac{M}{M'} \, \zeta_{M'} \big\|_{C([0,1]^d)} \geq \frac{M}{M'} \to \infty$
  as $k \to \infty$; here, we used that $\zeta_{M'}(0) = 1$ and that $\beta > \theta$.

  To see that $\big\| \frac{M}{M'} \, \zeta_{M'} \big\|_{\ApproxSpace([0,1]^d)} \leq 1$,
  note that $\beta \leq \theta + \delta_2 = \gamma$ and hence
  \[
    M
    = \kappa \cdot n_k^\beta
    \leq c \cdot n_k^\gamma
    \leq (\CoeffFunc(n_k))^L \, n_k^{\lfloor L/2 \rfloor}
    ,
  \]
  so that $M \leq C^L \, n_k^{\lfloor \frac{L}{2} \rfloor}$
  for a suitable $0 \leq C \leq \CoeffFunc(n_k)$.
  Thus, \Cref{eq:SobolevTypeEmbeddingNecessaryMainStep}
  shows $\frac{M}{M'} \zeta_{M'} \in \NNSigma[(d+L) n_k]$.
  Now, for $t \in \N$ there are two cases:
  If $t \geq (d+L) \cdot n_k$,
  then $t^\alpha \, d_p (\frac{M}{M'} \, \zeta_{M'}, \NNSigma[t]) = 0 \leq 1$.
  If otherwise $t < (d+L) \cdot n_k$, then the estimate from Step~2 combined with our
  choice of $M$, $M'$ and $\kappa$ shows that
  \[
    t^\alpha \, d_p\bigl(\tfrac{M}{M'} \, \zeta_{M'}, \NNSigma[t]\bigr)
    \leq t^\alpha \, \tfrac{M}{M'} \, \| \zeta_{M'} \|_{L^p([0,1]^d)}
    \leq \kappa \cdot (d+L)^\alpha \cdot n_k^{\alpha - \theta - \theta \frac{d}{p} + \beta}
    \leq n_k^{\alpha - \theta - \theta \frac{d}{p} + \theta + \delta_1}
    =    1 .
  \]
  Finally, we also have
  \(
    \| \frac{M}{M'} \, \zeta_{M'} \|_{L^p([0,1]^d)}
    \leq \frac{M}{M'} \cdot (M')^{-d/p}
    \leq \kappa \cdot n_k^{\beta - \theta - \theta \frac{d}{p}}
    \leq n_k^{\theta + \delta_1 - \theta - \theta \frac{d}{p}}
    =    n_k^{-\alpha}
    \leq 1 .
  \)
  Overall, this shows $\PreNorm(\frac{M}{M'} \, \zeta_{M'}) \leq 1$,
  so that \Cref{lem:ApproximationSpaceProperties} shows
  $\| \frac{M}{M'} \, \zeta_{M'} \|_{\ApproxSpace} \leq 1$ as well.
  As seen above, this completes the proof of the first part of the theorem.

  \medskip{}

  \textbf{Step~4:} In this step, we prove the second part of the theorem.
  By the assumptions of that part, we have
  \(
    \gamma^\ast (\LayerFunc,\CoeffFunc)
    > \alpha \frac{1 - \beta}{\beta + \frac{d}{p}}
    = \frac{\alpha}{\nu}
  \)
  for $\nu := \frac{\beta + \frac{d}{p}}{1 - \beta}$.
  By definition of $\gamma^\ast$, we can thus choose $\gamma > \frac{\alpha}{\nu}$
  and $L \in \N_{\leq \LayerFunc^\ast}$ satisfying
  \[
    c_0
    := \limsup_{n \to \infty}
       \Big[
         (\CoeffFunc(n))^L \cdot n^{\lfloor L/2 \rfloor - \gamma}
       \Big]
    \in (0,\infty].
  \]
  Since $\LayerFunc^\ast \geq 2$, we can clearly assume that $L \geq 2$.
  Choose a subsequence $(n_k)_{k \in \N}$ such that
  $c_0 = \lim_{k \to \infty} (\CoeffFunc(n_k))^L \cdot n_k^{\lfloor L/2 \rfloor - \gamma}$.
  Then, setting $c := \min \{ 1, c_0 / 2 \}$, there exists $k_0 \in \N$ satisfying
  $(\CoeffFunc(n_k))^L \cdot n_k^{\lfloor L/2 \rfloor} \geq c \cdot n_k^\gamma > 0$
  and $L \leq \LayerFunc(n_k)$ for all $k \in \N_{\geq k_0}$.

  Since $\gamma > \frac{\alpha}{\nu} = \frac{1-\beta}{\beta + \frac{d}{p}} \alpha$,
  we have $\frac{\alpha}{\beta + \frac{d}{p}} < \frac{\gamma}{1-\beta}$.
  Hence, we can choose
  \(
    \tau
    \in \bigl(\frac{\alpha}{\beta + \frac{d}{p}}, \frac{\gamma}{1-\beta}\bigr)
    \subset (0,\infty)
    .
  \)
  Then $\tau \cdot (1 - \beta) < \gamma$ and furthermore
  \[
    \tau \cdot (\beta + \tfrac{d}{p}) > \alpha
    \quad \Longrightarrow \quad
    \tau \cdot (1 - \beta - 1 - \tfrac{d}{p}) < - \alpha
    \quad \Longrightarrow \quad
    \tau \cdot (1 - \beta) < \tau \cdot (1 + \tfrac{d}{p}) - \alpha .
  \]
  Hence, we can choose
  \(
    \theta \in
    \big(
      \tau \cdot (1 - \beta), \,\,
      \min
      \{
        \gamma,
        \tau \cdot (1 + \frac{d}{p}) - \alpha
      \}
    \big)
    \subset (0,\infty)
    .
  \)

  Now, choose $\kappa \in (0,1]$ such that $\kappa \leq c$ and $\kappa \cdot (L + d)^\alpha \leq 1$.
  Fix $k \in \N_{\geq k_0}$ for the moment, and set $M := \kappa \cdot n_k^\theta$
  and $M' := n_k^\tau \geq 1$.
  Since
  \(
    (\CoeffFunc(n_k))^L \cdot n_k^{\lfloor L/2 \rfloor}
    \geq c \cdot n_k^\gamma
    \geq \kappa \cdot n_k^\gamma
    ,
  \)
  we can choose $0 \leq C \leq \CoeffFunc(n_k)$ satisfying
  \(
    C^L \cdot n_k^{\lfloor L/2 \rfloor}
    \geq \kappa \cdot n_k^\gamma
    \geq \kappa \cdot n_k^\theta
    = M.
  \)
  Then, Step~1 shows that $f := \frac{M}{M'} \cdot \zeta_{M'} \in \NNSigma[(L+d) n_k]$.
  Now, for $t \in \N$ there are two cases:
  If $t \geq (L + d) \cdot n_k$, then $t^\alpha \cdot d_p (f, \NNSigma[t]) = 0 \leq 1$.
  If otherwise $t < (L+d) \cdot n_k$, then the estimate from Step~2
  combined with the bound $\theta \leq \tau \cdot (1 + \frac{d}{p}) - \alpha$ shows
  \begin{align*}
    t^\alpha \cdot d_p(f, \NNSigma[t])
    & \leq (L+d)^\alpha \cdot n_k^\alpha \cdot \| f \|_{L^p} \\
    & \leq (L+d)^\alpha \cdot n_k^\alpha \cdot \frac{M}{M'} \cdot (M')^{-d/p} \\
    & \leq (L+d)^\alpha \cdot \kappa \cdot n_k^{\alpha + \theta - \tau - \tau \frac{d}{p}} \\
    & \leq n_k^{\alpha + \tau \cdot (1 + \frac{d}{p}) - \alpha - \tau - \tau \frac{d}{p}}
      =    1 .
  \end{align*}
  Furthermore, Step~2 also shows that
  \[
    \| f \|_{L^p}
    = \frac{M}{M'}
      \| \zeta_{M'} \|_{L^p}
    \leq \kappa \cdot n_k^{\theta - \tau} \cdot (M')^{-d/p}
    \leq n_k^{\theta - \tau - \tau \frac{d}{p}}
    \leq n_k^{\tau \cdot (1 + \frac{d}{p}) - \alpha - \tau - \tau \frac{d}{p}}
    =    n_k^{-\alpha}
    \leq 1.
  \]
  Overall, we see that $\PreNorm(f) \leq 1$, so that \Cref{lem:ApproximationSpaceProperties} shows
  $\| f \|_{\ApproxSpace} \leq 1$.

  We now show that $\| f \|_{C^{0,\beta}} \to \infty$ as $k \to \infty$;
  since $\| f \|_{\ApproxSpace} \leq 1$, this will imply that the embedding
  $\ApproxSpace ([0,1]^d) \hookrightarrow C^{0,\beta}([0,1]^d)$ does \emph{not} hold.
  To see that $\| f \|_{C^{0,\beta}} \to \infty$ as $k \to \infty$, note that
  $f(0) = \frac{M}{M'}$ and $f(\frac{1}{M'},0,\dots,0) = 0$.
  Hence,
  \[
    \| f \|_{C^{0,\beta}}
    \geq \Lip_\beta (f)
    \geq \frac{|f(0) - f(\frac{1}{M'},0,\dots,0)|}{\|0 - (\frac{1}{M'},0,\dots,0)\|_{\ell^\infty}^\beta}
    =    (M')^\beta \cdot \frac{M}{M'}
    =    n_k^{\beta \tau - \tau} \cdot \kappa \cdot n_k^\theta
    =    \kappa \cdot n_k^{\theta - \tau \cdot (1 - \beta)}
    \!\xrightarrow[k\to\infty]{}\! \infty ,
  \]
  since $\theta > \tau \cdot (1 - \beta)$.
\end{proof}

% Bibliography
%\bibliographystyle{plain}
\bibliographystyle{myplain}
\footnotesize
\bibliography{references,refs}

\end{document}